\title{Bent functions and line ovals}
\author{Kanat Abdukhalikov\thanks{This work was supported by UAEU grant 31S107}\\	
Department of Mathematical Sciences\\
UAE University, PO Box 15551, Al Ain, UAE\\
abdukhalik@uaeu.ac.ae}
\date{ }
\begin{document}

\maketitle

\newcommand{\Fn}{\mbox{$\mathbb{F}_{2^n}$}}                
\newcommand{\Fm}{\mbox{$\mathbb{F}_{2^m}$}}                
\newcommand{\Fnm}{\mbox{$\mathbb{F}_{2^m}^*$}}          
\newcommand{\Fmm}{\mbox{$\mathbb{F}_{2^m}^*$}}          
\newcommand{\Fpm}{\mbox{$\mathbb{F}_{p^m}$}}              
\newcommand{\Fp}{\mbox{$\mathbb{F}_{p}$}}                     
\newcommand{\Ft}{\mbox{$\mathbb{F}_{2}$}}                

\theoremstyle{plain}
\newtheorem{theorem}{Theorem}[section]
\newtheorem{lemma}[theorem]{Lemma}
\newtheorem{corollary}[theorem]{Corollary}
\newtheorem{proposition}[theorem]{Proposition}
\newtheorem{fact}[theorem]{Fact}
\newtheorem{observation}[theorem]{Observation}
\newtheorem{claim}[theorem]{Claim}

\theoremstyle{definition}
\newtheorem{definition}[theorem]{Definition}
\newtheorem{example}[theorem]{Example}
\newtheorem{conjecture}[theorem]{Conjecture}
\newtheorem{open}[theorem]{Open Problem}
\newtheorem{problem}[theorem]{Problem}
\newtheorem{question}[theorem]{Question}

\newtheorem{remark}[theorem]{Remark}
\newtheorem{note}[theorem]{Note}

\begin{abstract}
In this paper we study those bent functions which are linear on elements of  spreads,
their connections with ovals and line ovals, and we give descriptions of their dual bent functions.
In particular, we give a geometric characterization of Niho bent functions and of their duals, we 
give explicit formula for the dual bent function and present direct connections with ovals and line ovals.
We also show that bent functions which are linear on elements of inequivalent spreads can be EA-equivalent.
\end{abstract}

Keywords:  Spreads, ovals, line ovals, quasifields, semifields, bent functions, Niho bent functions. 

MSC: 51E15, 51E21, 51E23, 12K10, 94A60. 

\section{Introduction}

Bent functions were introduced by Rothaus \cite{Rothaus76}  and then they  were studied by Dillon \cite{Dillon74}
as Hadamard difference sets.  A bent function is a Boolean function with an even number of variables which
achieves the maximum possible distance from affine functions \cite{Carlet2010}.  Bent functions have relations
to coding theory, cryptography, sequences, combinatorics and designs theory \cite{Ab2015,Carlet2010,CarletMes2016}.

Dillon \cite{Dillon74} introduced bent functions related to partial spreads of
$\Fm \times \Fm$.
He constructed bent functions that are constant on the elements of a spread.
Dillon also studied a class of  bent functions that are linear on the
elements of a Desarguesian spread. These functions were throughly studied in
\cite{Buda2012,Buda2016,Carlet2011,Helleseth2012,Dobbertin2006,Leander2006} as Niho bent
functions.
In \cite{AbMes2015,Carlet2015,Ces2015,MesCM2015} these investigations were extended to
other types of spreads, and bent functions which are affine on the elements of spreads, were studied.

In this paper we study bent functions which are linear on the elements of  spreads
 and give geometric interpretations of their duals.
Carlet and Mesnager showed  \cite{Carlet2011} that 
any bent function which are linear on the elements of a Desarguesian spread
(they are equivalent to Niho bent functions in a bivariate form) determines an o-polynomial 
(oval polynomial) from finite geometry. Every o-polynomial defines an equivalence class of hyperovals, 
therefore Carlet and Mesnager revealed a general connection between Niho bent functions and hyperovals 
in Desarguesian spreads.  
But there are several inequivalent bent functions for each o-polynomial. 
We make result of Carlet and Mesnager more precise and show that bent functions linear on elements
of a Desarguesian spread  are in one-to-one correspondence with line ovals in an affine plane.
Points of the line oval completely define the dual bent function. 
More precisely, the zeros of the dual function of a Niho bent function are exactly the points of the line oval 
(in other words, the dual function of a Niho bent function is obtained from the characteristic
function of the set of points of the line oval by adding all-one constant function). 
Therefore, we have geometric characterization of Niho bent functions and of their duals.
In addition, starting from that line ovals one can construct ovals, but in general they will be in a projective plane,
not affine plane.
So every Niho bent function uniquely defines a line oval in an affine plane,  
and conversely, every line oval in an affine plane uniquely defines a Niho bent function.
Similarly, each Niho bent function uniquely defines an oval (in general, in a projective plane)
with a special property 
and conversely, such an oval uniquely determines a Niho bent function.
Hence, from known Niho bent functions one can get new compact representations of ovals,
and conversely from known representations of ovals one can get new examples of Niho bent functions. 
In particular, Subiaco and Adelaide hyperovals can be written in a very compact way, contrary to the complicated 
representation when they are written with the help of o-polynomials 
(Subiaco and Adelaide hyperovals have very complicated o-polynomials).  
In addition, our investigations allow us to address an open question on duals of Niho bent functions and
give explicit straightforward formula for the dual function of any Niho bent function.
We also discuss a question on EA-equivalence of Niho bent functions obtained from equivalent hyperovals.
Note that hyperovals have not yet been classified, and the list of known hyperovals can be found in  
\cite{Cher,Cher1996}.

Next we consider the general case of arbitrary spreads and  investigate  bent  functions
which are linear on the elements of a spread. 
We show that such bent functions are in one-to-one correspondence with line ovals in an appropriate affine plane. 
We also show that the dual of such bent function $f(x,y)$ can be characterized by the corresponding  line oval  
(the zeroes of the dual function $\tilde{f}(x,y)$ can be obtained from the points of the line oval by
swapping coordinates $(x,y)$). 
Then we study these constructions in two special cases related to symplectic spreads and consider examples.
We show that bent functions which are linear on the elements of inequivalent spreads
can be EA-equivalent by presenting such examples.

The paper is organized as follows.
We recall first in Section \ref{preliminary} definitions and notation concerning bent function,
spreads, ovals and line ovals.
In Section \ref{Niho} we study Niho bent functions, their duals and their connections with ovals and line ovals.
Next, in Section \ref{spreads} we consider the general case of spreads and  investigate  bent  functions
which are linear on the elements of a spread.


\section{Preliminary considerations and notation}
\label{preliminary}

We recall here some definitions and notation.

\subsection{Bent functions}

Let $K=\mathbb{F}_{2^n}$ and $\mathbb{F}_{2}$ be finite fields of orders
$2^n$ and $2$ respectively.
Let $\mathbb{F}^n_2$ be an $\mathbb F_2$-vector space of dimension $n$. We shall endow
$\mathbb{F}^n_2$ with the  structure of the field $\mathbb{F}_{2^n}$.
A Boolean function on $\mathbb{F}_{2^n}$  is a mapping from $\mathbb{F}_{2^n}$ to the prime field $\mathbb{F}_2$.

If $f$ is a Boolean function defined on $\mathbb{F}_{2^n}$, then the Walsh transform of $f$   is defined as follows:
$$  W_f(b) = \sum_{x\in \mathbb{F}_{2^n}} (-1)^{f(x)+ Tr(b x)}, $$
where $Tr$ is  the trace function from  $\mathbb{F}_{2^n}$ to $\mathbb{F}_2$.
Bent functions can be defined in terms of the Walsh transform.
Let $n$ be an even integer. A Boolean function $f$  on $\mathbb{F}_{2^n}$ is said to be {\em bent} if its Walsh transform satisfies $W_f(b) = \pm 2^{n/2}$ for all $b \in \mathbb{F}_{2^n}$.

Given a bent function $f$ over $\mathbb{F}_{2^n}$, we can always define its {\em dual function}, denoted by
$\tilde{f}$,  when considering the signs
of the values of the Walsh transform $W_f(b)$  of $f$. More precisely, $\tilde{f}$ is defined by the equation:
$$(-1)^{ \tilde{f}(x)}2^{n/2}=W_f(x). $$
The dual of a bent function is bent again, and  $\tilde{\tilde{f}}=f$.

Boolean functions $f,g: \mathbb{F}_{2^n}\rightarrow \mathbb{F}_2$ are \emph{extended-affine equivalent}
(in brief, EA-equivalent) if there exist an affine permutation $L$ of $\mathbb{F}_{2^n}$ and an affine function $\ell:\mathbb{F}_{2^n}\rightarrow \mathbb{F}_2$ such that $g(x)=(f\circ L)(x)+\ell(x)$.

If Boolean functions $f$ and $g$ are EA-equivalent and $f$ is bent then $g$ is bent too. 

\subsection{Ovals and line ovals}

Let $F=\mathbb{F}_{2^m}$ be a finite field of order  $q=2^m$. 
The affine plane $AG(2,q)$ has as points the vectors of $V=F\times F$,  and as lines 
$\{ (c,y) \mid y \in F\}$ and $\{ (x,xb +a) \mid x \in F\}$, $a, b, c \in F$.
These lines can be described by equations $x=c$ and $y=x b +a$.
The projective plane $PG(2,q)$ of order $q$ is obtained from $AG(2,q)$ by adding points at infinity 
in the following way. 
A set of parallel lines in $AG(2,q)$ define a point at infinity: the point at infinity corresponding to 
the parallel lines $x=c$, $c\in F$, is denoted by $(\infty)$ and the point at infinity corresponding to 
the parallel lines $y=xb+a$, $a\in F$, is denoted by $(b)$. 
Lines of $PG(2,q)$ are  $\{ (c,y) \mid y \in F\} \cup \{ (\infty) \}$ 
and $\{ (x,xb +a) \mid x \in F\}\cup \{ (b) \}$, $a, b, c \in F$, and the line at infinity $\{ (b) \mid b\in F \} \cup \{ (\infty) \}$.

Let $PG(2,q)$ be a finite projective plane of order $q$.
An \emph{oval} is a set of $q+1$ points, no three of which are collinear. Dually, a \emph{line oval} is a set of $q+1$
lines no three of which are concurrent. Any line of the plane meets the oval $\mathcal{O}$ at either
0, 1 or 2 points and is called exterior, tangent or secant, respectively. 
All the tangent lines to the oval $\mathcal{O}$ concur  \cite{Hir} at the same point $N$, called the \emph{nucleus} 
(or the knot) of $\mathcal{O}$. The set $\mathcal{O} \cup N$ becomes a \emph{hyperoval}, that is a set of $q+2$
points, no three of which are collinear. 
Conversely, by removing any point from hyperoval one gets an oval. 
If $\mathcal{O}$ is a line oval, then there is exactly one line $\ell$
such that on each of its points there is only one line of $\mathcal{O}$. This line is called the (dual)
\emph{nucleus} of $\mathcal{O}$. The $(q+2)$-set $\mathcal{O}\cup \{ \ell\}$ is \emph{line hyperoval} or
\emph{dual hyperoval}.

By a \emph{line oval} $\mathcal{O}$ in an affine plane $AG(2,q)$ we assume a set of $q+1$ lines in $AG(2,q)$, 
such that these lines, extended by corresponding points at infinity, determine a line oval in $PG(2,q)$ 
(whose nucleus is the line at infinity).  If a line $s$ belongs to a line oval $\mathcal{O}$, remaining $q$ lines from 
$\mathcal{O}$ intersect the line $s$ in $q$ different points. But the line $s$ contains $q$ points, therefore any point of $s$ 
belongs to two lines from $\mathcal{O}$.  In other words, for any line oval through any point of $AG(2,q)$ 
either 2 or 0 lines pass. 

For any oval in $PG(2,q)$ there are $q(q+1)/2$ secants and $q(q-1)/2$ exterior lines. 
Dually, let $\mathcal{O}$ be a line oval in an affine plane $AG(2,q)$ 
and $E(\mathcal{O})$ the set of points which are on the lines of the line oval $\mathcal{O}$:
$$E(\mathcal{O}) = \{ (x,y) \in V \mid (x,y) \ \rm{ is \ on \ a \ line \ of \ } \mathcal{O} \}.$$
Then each point of $E(\mathcal{O})$ belongs  to two lines of $\mathcal{O}$, 
$$|E(\mathcal{O})|=q(q+1)/2 $$ 
and there are $q(q-1)/2$ points in $AG(2,q)$ that do not belong to $\mathcal{O}$ 
(see, for example, \cite[p.~163, Corollary 1]{Hir}, \cite[p.~148, Statement 23]{Dem} or \cite[p.~126]{Mas2003}).  
Note that Kantor \cite[Theorem 7]{Kantor75} showed that $E(\mathcal{O})$ is a difference set in $K$.

\subsection{Polar representations}

Let $F=\mathbb{F}_{2^m}$ be a finite field of order  $2^m$, $q=2^m$, $n=2m$.
Consider $F$ as subfield of $K=\mathbb{F}_{2^n}$, so $K$ is a two dimensional vector space over $F$. 

As usually, for finite fields $\mathbb{F}_{2^{ks}} \supseteq \mathbb{F}_{2^k}$ the \emph{trace} function 
$Tr_{\mathbb{F}_{2^{ks}} / \mathbb{F}_{2^k}}$ of $x\in \mathbb{F}_{2^{ks}}$ over $\mathbb{F}_{2^k}$ is defined by 
$$Tr_{\mathbb{F}_{2^{ks}} / \mathbb{F}_{2^k}}(x)=x+x^{2^k}+\cdots +x^{2^{k(s-1)}}.$$
For different kinds of trace functions we denote
$$Tr (x)=Tr_{\Fn/ \Ft}(x), \quad T (x)=Tr_{\Fn / \Fm}(x), \quad tr(x)=Tr_{\Fm/ \Ft}(x).$$

The \emph{conjugate} of $x\in K$ over $F$ is
$$\bar{x} =x^q.$$
Then the \emph{trace map} from $K$ to $F$ is
$$Tr_{K/F}(x)=T(x) = x + \bar{x},$$
and the \emph{norm map} from $K$ to $F$ is 
$$N_{K/F} (x) = x \bar{x} = x^{1+q}.$$
The \emph{unit circle} of $K$ is the set of elements of norm $1$:
$$S= \{ u\in K : u\bar{u} =1 \}. $$
Therefore, $S$ is the multiplicative group of $(q+1)$st roots of unity in $K$.
Since $F\cap S = \{ 1\}$, each non-zero element of $K$ has  a unique polar coordinate representation
 $$ x=\lambda u$$
 with $\lambda\in F^*$ and $u \in S$. For any $x\in K^*$ we have
 $$\lambda = \sqrt{x \bar{x}},$$
 $$u= \sqrt{x/ \bar{x}}.$$
 
 In thesis \cite[p.~24]{Deorsey2015}, following ideas from \cite[p.~32]{Fisher2006}, the $\rho$-polynomials were
introduced in the following way. 
Since $K$ is a two dimensional vector space over $F$, one can 
identify points of affine plane $AG(2,q)$ with elements of the field $K=\mathbb{F}_{2^n}$. 
Consider hyperoval in $K$ containing $0\in K$. Then this hyperoval should contain exactly one point (not counting $0$)
on each line passing through $0$, i.e., on elements of the set $uF$,  for all $u \in S$.
Therefore, nonzero points of the hyperoval can be written as $u\rho(u)$, for some function $\rho: S \rightarrow F^*$.
Such functions are called $\rho$-polynomials in  \cite{Deorsey2015}.  Then the points of the hyperoval
will be $\{ u\rho(u) : u \in S \} \cup \{ 0\}$. In other words, $\{ u\rho(u) : u \in S \}$ is an oval with nucleus in $0\in K$. 

One can define nondegenerate bilinear form by $(\cdot , \cdot ) : K \times K \rightarrow F$ by
$$(x,y)=T(xy).$$
Lines of  $AG(2,q)$ can be considered as the zeroes of an equation $(a,x)+b=0$. Normalising  $a$ to $u\in S$, 
we see that lines of  $AG(2,q)$ can be considered  as the zeroes of an equation $T(ux)+\mu =0$:
$$L(u,\mu) =\{ x \in K : T(ux)+\mu =0 \},$$
where $u\in S$ and $\mu\in F$ (see for details  \cite[subsection 2.1]{Ball1999}).
Note that there are $(q+1)q =q^2+q$ such lines, which coincides with the total number of lines in $AG(2,q)$.
Lines $L(u,\lambda)$ and $L(u,\mu)$ are parallel.

 \subsection{Niho bent functions}

The affine plane $AG(2,q)$ has as points the vectors of the space $V = F \times F$.
A Desarguesian spread of $F \times F$ is a collection  of $q+1$  one-dimensional subspaces such that every
nonzero point of $F \times F$ lies in a unique subspace \cite{Dem}. 
Elements of $K=\mathbb{F}_{2^n}$ can be considered as points of affine plane $AG(2,q)$.
Then the set
$$\{ uF : u \in S \}$$
is a spread.  We consider bent Boolean functions $f: K \rightarrow \Ft$, which are $\Ft$-linear on each element 
$uF$ of the spread, and call them {\em Niho bent functions}.

A positive integer $d$ (always understood modulo $2^n-1$) is said to be a {\em Niho exponent} and
$x\rightarrow x^d$ is a {\em Niho power function} if the restriction of $x^d$ to $F$ is linear or,
in other words, $d\equiv 2^j \pmod{2^m-1}$ for some $j<n$.   As one considers $Tr(ax^d)$ with $a\in K$,
without loss of generality, one can assume that $d$ is in the normalized form, i.e., with $j=0$.
Then we have a unique representation $d=(2^m-1)s+1$ with $2\le s \le2^m$.
If some $s$ is written as a fraction, this has to be interpreted modulo $2^m+1$ (e.g., $1/2 =2^{m-1}+1$).
Following are some known bent functions consisting of one or more Niho exponents:

\begin{enumerate}
\item
Quadratic function $Tr(ax^{d_1})$ with $a\in K^*$, $d_1=(2^m-1)\frac{1}{2}+1$.

\item
Binomials of the form $Tr(\alpha_1 x^{d_1} + \alpha_2 x^{d_2})$, where $d_1=(2^m-1)\frac{1}{2}+1$,
$\alpha_1, \alpha_2 \in K^*$, $(\alpha_1 + \overline{\alpha_1})^2 = \alpha_2  \overline{\alpha_2}$.
The possible values of $d_2$ are \cite{Dobbertin2006}:
$$d_2=(2^m-1)3+1,$$
$$d_2=(2^m-1)\frac{1}{6}+1 \quad ({\rm taking} \ m \ {\rm even}).$$

\item
Take $1<r<m$ with $\gcd(r,m)=1$ and define
$$f(x)=Tr(a^2 x^{2^m+1} + (a+\bar{a}) \sum_{i=1}^{2^{r-1}-1} x^{d_i}),$$
where $2^r d_i=(2^m-1)i+2^r$ and $a\in K$, $a+\bar{a} \not= 0$ \cite{Leander2006}. 
In particular, in the simplest case that $r=2$ and $m$ is odd, one has 
$$f(x)=Tr(a x^{(2^m-1)\frac{1}{2}+1} + (a+\bar{a}) x^{(2^m-1)\frac{1}{4} +1}),$$
which is the case considered in \cite{Dobbertin2006}. 
\end{enumerate}


\section{Geometric characterization of Niho bent functions}
\label{Niho}

Let Boolean function $f: K \rightarrow \Ft$ be linear on elements of the spread $\{ uF : u \in S \}$.
Then  for any $\lambda \in F$, function $f$ can be defined by
\begin{equation}
\label{func}
f(\lambda u)=tr(\lambda g(u))
\end{equation}
for some function $g: S \rightarrow F$.

\begin{theorem}
\label{thm:main}
Let the function $f$ be defined by Equation (\ref{func}).
Then the following statements are equivalent:
\begin{enumerate}
\item
The function $f$  is  bent;
\item
The set $\mathcal{O} = \{ L(u,g(u)) : u\in S\}$  is a line oval in $K$. 
\end{enumerate}
In this case the dual bent function for $f(x)$ is
$$\tilde{f}(x) = 1 + \chi_{E(\mathcal{O})}(x) =
\left\{ \begin{array} {l}
0, \ {\rm if} \ x\in E(\mathcal{O}), \\ 1, \ {\rm if} \ x\not\in E(\mathcal{O}),
\end{array} \right. $$
where $E(\mathcal{O})$ is the set of points which are on the lines of the line oval $\mathcal{O}$.

In addition, any line oval $\mathcal{O}$ in $K$ can be written as $\mathcal{O} = \{ L(u,g(u)) : u\in S\}$ 
for some function $g: S \rightarrow F$, and it 
determines a bent function, whose restrictions to the elements of the spread $\{ uF : u \in S \}$ are linear.
\end{theorem}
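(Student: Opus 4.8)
The plan is to treat the last paragraph as two separate claims: (i) every line oval $\mathcal{O}$ in $K=AG(2,q)$ can be written as $\mathcal{O}=\{L(u,g(u)):u\in S\}$ for some $g:S\to F$, and (ii) this $g$, plugged into Equation~(\ref{func}), yields a bent function linear on the spread $\{uF:u\in S\}$. Claim (ii) then reduces to the implication $(2)\Rightarrow(1)$ of the theorem, so the real work is in (i).

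For (i) I would argue purely on the level of the parametrization of lines from Subsection~\ref{preliminary} (polar representations). Every line of $AG(2,q)$ is $L(u,\mu)$ for a unique pair $(u,\mu)\in S\times F$, and for fixed $u$ the lines $L(u,\mu)$, $\mu\in F$, form one parallel class; as $u$ ranges over $S$ we get the $q+1$ parallel classes, i.e.\ the $q+1$ points of the line at infinity $\ell_\infty$ of $PG(2,q)$. Thus $u\mapsto$ (point at infinity of the parallel class of $L(u,\cdot)$) is a bijection $S\to\ell_\infty$. By definition an affine line oval is one whose projective closure is a line oval with nucleus $\ell_\infty$, so each point of $\ell_\infty$ lies on exactly one line of $\mathcal{O}$. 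Transporting this through the bijection: for each $u\in S$ there is exactly one $\mu\in F$ with $L(u,\mu)\in\mathcal{O}$; call it $g(u)$. This defines $g:S\to F$ with $\{L(u,g(u)):u\in S\}\subseteq\mathcal{O}$, and since both sets have $q+1$ elements they coincide.

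For (ii) I would define $f:K\to\Ft$ by $f(\lambda u)=tr(\lambda g(u))$ as in Equation~(\ref{func}). This is well defined because every nonzero $x\in K$ has a unique polar representation $x=\lambda u$ with $\lambda\in F^*$, $u\in S$, while $f(0)=tr(0)=0$ regardless of the $u$ chosen for $0$; and its restriction to $uF$, namely $\lambda\mapsto tr(\lambda g(u))$, is $\Ft$-linear in $\lambda$, so $f$ is linear on every element of the spread. Since $\mathcal{O}=\{L(u,g(u)):u\in S\}$ is a line oval, the equivalence $(2)\Rightarrow(1)$ already proved in the theorem shows $f$ is bent, and the dual-function formula of the theorem gives $\tilde f=1+\chi_{E(\mathcal{O})}$.

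The step I expect to be the main obstacle is the bookkeeping in (i): pinning down precisely which point of $\ell_\infty$ the parallel class $\{L(u,\mu):\mu\in F\}$ corresponds to, so that the abstract property ``nucleus $=\ell_\infty$'' translates cleanly into ``one line of $\mathcal{O}$ per value of $u\in S$''. Once that identification is in place, the counting argument and the appeal to the already-established part of the theorem are routine.
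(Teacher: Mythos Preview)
Your proposal addresses only the final paragraph of the theorem (the ``In addition'' claim) and treats the equivalence $(1)\Leftrightarrow(2)$ and the dual-function formula as already established. But these are the heart of the theorem and are not proved anywhere earlier in the paper; you cannot invoke $(2)\Rightarrow(1)$ as given. The paper obtains the equivalence by a direct Walsh-transform computation: writing $x=\lambda u$ in polar form and summing first over $\lambda\in F$ gives
\[
W_f(b)=q\,(|N_b|-1),\qquad N_b=\{u\in S:\,g(u)+ub+\bar u\bar b=0\}=\{u\in S:\,b\in L(u,g(u))\}.
\]
Thus $f$ is bent iff $|N_b|\in\{0,2\}$ for every $b$, which is precisely the statement that through every point of $K$ pass $0$ or $2$ lines of $\mathcal{O}$, i.e.\ $\mathcal{O}$ is a line oval; and the same identity yields $\tilde f(b)=0\Leftrightarrow |N_b|=2\Leftrightarrow b\in E(\mathcal{O})$. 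This computation is the substance your proposal omits.

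For the part you do address, your argument is correct and is essentially the paper's. The paper phrases it as ``no two lines of $\mathcal{O}$ are parallel, since two parallel lines together with the line at infinity would be three concurrent lines'', while you use the equivalent formulation that each point of the nucleus $\ell_\infty$ lies on exactly one line of $\mathcal{O}$. Either way one gets exactly one line of $\mathcal{O}$ per $u\in S$, defining $g$. The ``bookkeeping'' you flag as the main obstacle is not one: the identification of parallel classes with elements $u\in S$ is immediate from the parametrization $L(u,\mu)$ set up in the preliminaries, and the paper simply uses it without further comment.
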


\begin{proof}
The Walsh transform of the function $f(x)$ is
\begin{eqnarray*}
W_f(b)
&  = & \sum_{x\in K} (-1)^{f(x) + Tr( xb)}     \\
&  = & 1+ \sum_{\lambda\in F^*, \ u\in S} (-1)^{f(\lambda u) +Tr (\lambda ub)}     \\
&  = & 1+ \sum_{\lambda\in F^*, \ u\in S} (-1)^{tr(\lambda g(u)) + tr ( \lambda ub + \lambda \bar{u}\bar{b})}     \\
&  = & 1 -(q+1)+ \sum_{\lambda\in F, \ u\in S} (-1)^{tr(\lambda (g(u) +  ub +  \bar{u}\bar{b}))}     \\
&  = & -q+ \sum_{u\in S} \sum_{\lambda\in F}(-1)^{tr(\lambda (g(u) +  ub +  \bar{u}\bar{b}))}     \\
&  = & -q + | N_b| q = q( |N_b |-1),
\end{eqnarray*}
where $N_b = \{ u \in S :  \ g(u) +  ub +  \bar{u}\bar{b}=0\}$.
Therefore,  Boolean function $f(x)$  is  bent if and only if $|N_b|=2$ or $0$ for any $b$.
But
$$N_b = \{ u \in S :  \ g(u) +  ub +  \bar{u}\bar{b}=0\} = \{ u \in S :  \  b \in L(u,g(u)) \}.$$
Hence, Boolean function $f(x)$  is  bent if and only if there are 2 or 0 lines of the form $L(u,g(u))$ passing 
through any point $b\in K$, which means that  the set of $q+1$ lines 
$\mathcal{O} = \{ L(u,g(u)) : u\in S\}$ forms a line oval.

Futhermore, $|N_b|=2$ if and only if $b$ belongs to a line $L(u,g(u)) \in \mathcal{O}$.
Therefore,  $W_f(b)=q$ if and only if $b \in E(\mathcal{O})$, which means
$\tilde{f}(b) = 0$ if and only if  $b \in E(\mathcal{O})$.


Finally, let  $\mathcal{O}$ be a line oval in $K$. 
There are no parallel lines in $\mathcal{O}$, since for corresponding line oval in a projective plane 
two parallel lines and line at infinity are concurrent.
Therefore, the line oval $\mathcal{O}$ consists of $q+1$ different lines of the form $L(u,\mu_u)$, $u \in S$. 
Hence, for any $u\in S$ there is unique element $\mu_u\in F$, so the line oval $\mathcal{O}$ defines a function 
$g :S \rightarrow F$ given by  $g(u)=\mu_u$. 
By the first part of Theorem, Boolean function $f$ defined by Equation (\ref{func}) is bent.
\end{proof}

Therefore, bent function $f$ defined by Equation (\ref{func}) {\em canonically} corresponds to a line oval
$\mathcal{O} = \{ L(u,g(u)) : u\in S\}$ in $K$ and its dual function $\tilde{f}$ is determined by
characteristic function of $E(\mathcal{O})$. 

Theorem \ref{thm:main} gives a formula for dual bent functions.

\begin{corollary}
\label{dual}
Let $f$ be a Niho bent function and $f(\lambda u)=tr(\lambda g(u))$ for a function $g: S \rightarrow F$. 
Then the dual function for $f(x)$ is of the form 
$$\tilde{f}(x) = \prod_{u\in S} (T(xu) +g(u))^{q-1}.$$
\end{corollary}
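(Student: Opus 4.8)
The plan is to obtain the formula as an immediate consequence of Theorem \ref{thm:main}, which already pins down the zero set of $\tilde f$: writing $\mathcal{O} = \{ L(u,g(u)) : u\in S\}$, the theorem gives $\tilde f(x) = 1 + \chi_{E(\mathcal{O})}(x)$, i.e. $\tilde f(x)=0$ precisely when $x\in E(\mathcal{O})$. So the only remaining task is to rewrite this indicator function as the displayed product over $S$.

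First I would translate membership in $E(\mathcal{O})$ into a field equation. By the definition of the lines, $L(u,\mu) = \{ x\in K : T(ux)+\mu = 0\}$, so the point $x$ lies on $L(u,g(u))$ exactly when $T(xu)+g(u)=0$ in $F$. Hence $x\in E(\mathcal{O})$ if and only if $T(xu)+g(u)=0$ for at least one $u\in S$, equivalently $\prod_{u\in S}(T(xu)+g(u)) = 0$ in $F$.

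Next I would invoke the standard fact that for $a\in F=\mathbb{F}_{2^m}$ one has $a^{q-1}=1$ when $a\neq 0$ and $a^{q-1}=0$ when $a=0$; that is, $a\mapsto a^{q-1}$ is the indicator of $a\neq 0$ on $F$. Applying this to each factor, $(T(xu)+g(u))^{q-1}$ equals $1$ if $x\notin L(u,g(u))$ and $0$ if $x\in L(u,g(u))$. Therefore the product $\prod_{u\in S}(T(xu)+g(u))^{q-1}$ equals $1$ exactly when $x$ lies on no line of $\mathcal{O}$, i.e. when $x\notin E(\mathcal{O})$, and equals $0$ otherwise. Comparing with the description of $\tilde f$ in Theorem \ref{thm:main} concludes the argument.

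There is essentially no obstacle here; the proof is a bookkeeping translation of Theorem \ref{thm:main}. The one point deserving a line of care is that the product on the right, a priori an element of $F$, actually takes values in $\{0,1\}=\mathbb{F}_2$ and hence defines a genuine Boolean function: this holds because each factor $(T(xu)+g(u))^{q-1}$ is already an idempotent lying in the prime subfield, so the product computed in $F$ coincides with the corresponding value in $\mathbb{F}_2$.
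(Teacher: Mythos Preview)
Your proposal is correct and follows essentially the same route as the paper: invoke Theorem \ref{thm:main} to identify the zero set of $\tilde f$ with $E(\mathcal{O})$, rewrite $x\in L(u,g(u))$ as $T(xu)+g(u)=0$, and use that $a^{q-1}\in\{0,1\}$ for $a\in F$ to turn this into the displayed product. Your extra remark that the product genuinely lands in $\mathbb{F}_2$ is a nice clarification the paper leaves implicit.
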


\begin{proof} 
Let ${\mathcal O} = \{ L(u,g(u)) : u \in S\}$ be the line oval in $K$ corresponding to $f(x)$. 
By Theorem \ref{thm:main},  $\tilde{f}(x) = 0$ if and only if $x \in L(u,g(u))$ for some $u\in S$,
which means that $T(xu) +g(u) =0$ for some $u\in S$.
It is equivalent to $(T(xu) +g(u))^{q-1} =0$ since $T(xu) +g(u) \in F$.
Hence
$$\tilde{f}(x) = \prod_{u\in S} (T(xu) +g(u))^{q-1},$$
as required.
\end{proof}

Corollary \ref{dual} allows us to answer open question on duals of known Niho bent functions.
For instance, for the Niho bent function of the form $f(x)=Tr (a x^{d_1})$,
$d_1=(2^m-1)\frac{1}{2}+1$,  we have
$$f(\lambda u)=Tr (\lambda(a u^{d_1})) =
tr(\lambda(a u^{d_1} +\bar{a}\bar{u}^{d_1}))=
tr(\lambda (au^{(-2)\frac{1}{2}+1}+\bar{a}\bar{u}^{(-2)\frac{1}{2}+1}))=
tr(\lambda (a+\bar{a}))     ,$$
$$g(u)= a  +\bar{a} .$$
Therefore,
$$\tilde{f}(x) = \prod_{u\in S} (xu+\bar{x}\bar{u} +a + \bar{a})^{q-1}.$$

For the Niho bent function of the form $f(x)=Tr (a x^{d_1} +x^{d_2})$, $a+\bar{a}=1$,
$d_1=(2^m-1)\frac{1}{2}+1$, $d_2=(2^m-1)3+1$, we have
$$f(\lambda u)=Tr (\lambda(a u^{d_1} +u^{d_2})) =
tr(\lambda(a u^{d_1} +\bar{a}\bar{u}^{d_1}+u^{d_2} +\bar{u}^{d_2})),$$
\begin{eqnarray*}
g(u)
&=& a u^{d_1} +\bar{a}\bar{u}^{d_1}+u^{d_2} +\bar{u}^{d_2} \\
&=& au^{(2^m-1)\frac{1}{2}+1}+\bar{a}\bar{u}^{(2^m-1)\frac{1}{2}+1}+u^{(2^m-1)3+1}+ \bar{u}^{(2^m-1)3+1} \\
&=& au^{(-2)\frac{1}{2}+1}+\bar{a}\bar{u}^{(-2)\frac{1}{2}+1}+u^{(-2)3+1}+ \bar{u}^{(-2)3+1}\\
&=& 1+u^5+\bar{u}^5.
\end{eqnarray*}
Therefore,
\begin{equation}
\label{dual1}
\tilde{f}(x) = \prod_{u\in S} (xu+\bar{x}\bar{u} +1+u^5 + \bar{u}^5)^{q-1}.
\end{equation}

For the Niho bent function of the form $f(x)=Tr (a x^{d_1} +x^{d_2})$, $a+\bar{a}=1$,
$d_1=(2^m-1)\frac{1}{2}+1$, $d_2=(2^m-1)\frac{1}{6}+1$, we have
$$g(u)= 1+u^{(-2)\frac{1}{6}+1}+ \bar{u}^{(-2)\frac{1}{6}+1} = 1+u^{2/3}+\bar{u}^{2/3}, $$
\begin{equation}
\label{dual2}
\tilde{f}(x) = \prod_{u\in S} (xu+\bar{x}\bar{u} +1 +u^{2/3} + \bar{u}^{2/3})^{q-1}.
\end{equation}

Finally, for the Niho bent function of the form
$f(x)=Tr(a x^{2^m+1} + \sum_{i=1}^{2^{r-1}-1} x^{d_i})$,
where $2^r d_i=(2^m-1)i+2^r$ and $a\in K$, $a+\bar{a}= 1$, we have
\begin{eqnarray*}
g(u)
&=& 1+ \sum_{i=1}^{2^{r-1}-1} u^{d_i}+ \sum_{i=1}^{2^{r-1}-1} \bar{u}^{d_i} \\
&=& 1+ \sum_{i=1}^{2^{r-1}-1} u^{-2i/2^r +1}+\sum_{i=1}^{2^{r-1}-1} \bar{u}^{-2i/2^r +1}\\
&=& 1+ u^{-2/2^r +1}\cdot \frac{1-(u^{-2/2^r})^{2^{r-1}-1}}{1-u^{-2/2^r}} +
\bar{u}^{-2/2^r +1}\cdot \frac{1-(\bar{u}^{-2/2^r})^{2^{r-1}-1}}{1-\bar{u}^{-2/2^r}}  \\
&=& 1+ \frac{u^{-2/2^r +1}-1}{1-u^{-2/2^r}} + \frac{\bar{u}^{-2/2^r +1}-1}{1-\bar{u}^{-2/2^r}} \\
&=& 1+ \frac{u- u^{2/2^r}}{u^{2/2^r}-1} + \frac{\bar{u}- \bar{u}^{2/2^r}}{\bar{u}^{2/2^r}-1}  \\
&=& \frac{u + \bar{u} + u \bar{u}^{2/2^r} + \bar{u}u^{2/2^r}}{u^{2/2^r} + \bar{u}^{2/2^r}} \\
&=& \frac{u + \bar{u} + u^{2^{1-r}-1} + \bar{u}^{2^{1-r}-1}}{u^{2^{1-r}} + \bar{u}^{2^{1-r}}}
\end{eqnarray*}
for $u\not= 1$ and $g(1)=1$. Therefore, 
\begin{equation}
\label{dual-transl}
\tilde{f}(x) = (x+\bar{x} +1)^{q-1} \prod_{u\in S\setminus \{ 1\}} \left(xu+\bar{x}\bar{u} +
\frac{u + \bar{u} + u^{2^{1-r}-1} + \bar{u}^{2^{1-r}-1}}{u^{2^{1-r}} + \bar{u}^{2^{1-r}}}\right)^{q-1}.
\end{equation}

We note that the previous dual function $\tilde{f}(x)$ is also calculated in \cite{Buda2012} and given by: 
\begin{equation}
\label{dual-transl2}
\tilde{f}(x) = Tr ((e(1+x+\bar{x}) + e^{2^{n-r}} + \bar{x})(1+x+\bar{x})^{1/(2^r-1)} ) ,  
\end{equation}
where $e$ is an element from $K$ with property $e+\bar{e}=1$. 
Calculations in Magma \cite{Bosma} confirm that expressions (\ref{dual-transl}) and (\ref{dual-transl2}) 
represent the same function (moreover, calculations in Magma also confirm formulas (\ref{dual1}) and (\ref{dual2})). 
 
We also note that in the simplest case that $r=2$ and $m$ is odd, we have 
$g(u)= 1+ u^{\frac{1}{2}} + \bar{u}^{\frac{1}{2}}$ and formula (\ref{dual-transl}) becomes 
$$\tilde{f}(x) = \prod_{u\in S} \left(xu+\bar{x}\bar{u} + 1+ u^{\frac{1}{2}} + \bar{u}^{\frac{1}{2}}\right)^{q-1} .$$

Next we show that adding a linear function $Tr(cx)$ to $f(x)$ produces a shift of the corresponding
line oval by the element $c$. We remind that if $f(x)$ is bent then $f(x)+Tr(cx)$ is bent as well.
Consider shifting by an element $c$ on the affine plane $AG(2,q)$:
$$\tau_c : x \mapsto x+c.$$

\begin{proposition}
\label{shift}
Let a bent function $f$ be defined by $f(\lambda u)=tr(\lambda g(u))$, where $\lambda\in F$, $u\in S$, and
$\mathcal{O} = \{ L(u,g(u)) : u\in S\}$ be its corresponding line oval.
Define function $f_c(x) = f(x)+Tr(cx)$ and line oval
$\mathcal{O}_{c} = \tau_{c} \mathcal{O}$, where $c \in F$.
Then
$$\widetilde{f_c}(x)= 1+ {\chi}_{E(\mathcal{O}_{c})}.$$
\end{proposition}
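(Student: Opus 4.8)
The plan is to recognise $\widetilde{f_c}$ as a direct instance of the dual formula of Theorem~\ref{thm:main}, applied to $f_c$ rather than to $f$. First I would note that $f_c=f+Tr(c\,\cdot\,)$ is bent, being EA-equivalent to the bent function $f$, and that it is still linear on each element $uF$ of the spread: writing $x=\lambda u$ with $\lambda\in F$, $u\in S$, one has $Tr(c\lambda u)=tr\bigl(T(c\lambda u)\bigr)=tr\bigl(\lambda\,T(cu)\bigr)$, so
$$f_c(\lambda u)=tr\bigl(\lambda\,(g(u)+T(cu))\bigr),$$
i.e. $f_c$ has the shape~(\ref{func}) with associated function $g_c(u)=g(u)+T(cu)$. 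Theorem~\ref{thm:main} then says that the line oval canonically attached to $f_c$ is $\{L(u,g_c(u)):u\in S\}$ and that $\widetilde{f_c}=1+\chi_{E(\cdot)}$ of that line oval, so the whole proof reduces to checking that this set equals $\mathcal{O}_c=\tau_c\mathcal{O}$.

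This verification is the only computation involved. Fix $u\in S$; since $T$ is additive and $-c=c$ in characteristic two,
$$\tau_cL(u,g(u))=\{\,x+c:T(ux)+g(u)=0\,\}=\{\,y\in K:T(uy)+T(uc)+g(u)=0\,\},$$
which is exactly the line $L\bigl(u,\,g(u)+T(uc)\bigr)=L(u,g_c(u))$. Hence $\mathcal{O}_c=\tau_c\mathcal{O}=\{L(u,g_c(u)):u\in S\}$ is precisely the set of lines associated with $f_c$. Applying Theorem~\ref{thm:main} to $f_c$ now gives at once that $\mathcal{O}_c$ is a line oval (which is also clear, since $\tau_c$ is a collineation carrying the line oval $\mathcal{O}$ onto $\mathcal{O}_c$) and that $\widetilde{f_c}(x)=1+\chi_{E(\mathcal{O}_c)}(x)$, as claimed.

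A slightly shorter variant, which does not even need $g_c$, is to compute the Walsh transform directly: $W_{f_c}(b)=\sum_{x\in K}(-1)^{f(x)+Tr((b+c)x)}=W_f(b+c)$, whence $\widetilde{f_c}=\tilde f\circ\tau_c$; then $\widetilde{f_c}(x)=\tilde f(x+c)=1+\chi_{E(\mathcal{O})}(x+c)=1+\chi_{\tau_cE(\mathcal{O})}(x)$, and it remains only to note that $\tau_cE(\mathcal{O})=E(\tau_c\mathcal{O})=E(\mathcal{O}_c)$, because a point lies on a line exactly when its $\tau_c$-image lies on the $\tau_c$-image of that line. Either way, the one step requiring care is the bookkeeping in the identity $\tau_cL(u,g(u))=L\bigl(u,\,g(u)+T(uc)\bigr)$ and its matching with the restriction of $Tr(cx)$ to $uF$; everything else is an immediate appeal to Theorem~\ref{thm:main} (or to the elementary identity $W_{f_c}(b)=W_f(b+c)$).
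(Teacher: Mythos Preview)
Your proposal is correct. Your ``shorter variant'' is exactly the paper's proof: compute $W_{f_c}(b)=W_f(b+c)$, read off from Theorem~\ref{thm:main} that this is $q$ or $-q$ according as $b+c\in E(\mathcal{O})$ or not, and translate this to $b\in E(\mathcal{O}_c)$.

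Your first approach---identifying $g_c(u)=g(u)+T(cu)$ and then invoking Theorem~\ref{thm:main} directly for $f_c$---is a mildly different route. It trades the one-line Walsh identity for the explicit verification $\tau_cL(u,g(u))=L(u,g_c(u))$, which makes transparent \emph{why} the shifted line oval is again of the canonical form $\{L(u,g_c(u)):u\in S\}$; this is exactly the observation the paper uses later (in the proof of Theorem~\ref{thm:oval}, part~2) without spelling it out. So your first argument is slightly more informative, while the paper's (your second) is shorter.
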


{\bf Proof}. We have
\begin{eqnarray*}
W_{f_c}(b)
&  = & \sum_{x \in F} (-1)^{f(x)+Tr(cx) + Tr(bx)}     \\
&  = & W_f (b+c)   \\
& =  & \left\{ \begin{array} {l}
\ \  q, \ {\rm if} \ b+c \in E(\mathcal{O}), \\ -q, \ {\rm if} \ b+c \not\in E(\mathcal{O})
\end{array} \right. \\
& =  & \left\{ \begin{array} {l}
\ \  q, \ {\rm if} \ b\in E(\mathcal{O}_{c}), \\ -q, \ {\rm if} \ b\not\in E(\mathcal{O}_{c}).
\end{array} \right. \\
\end{eqnarray*}
Therefore,
$$\widetilde{f_c}(x)= 1+ {\chi}_{E(\mathcal{O}_{c})}. \quad \Box$$

We showed that Niho bent functions are in one-to-one correspondence with line ovals in an affine plane. 
Now, using duality between line ovals and ovals, we also show that Niho bent functions are in one-to-one 
correspondence with ovals with special property.  
Then known Niho bent functions allow us to get new compact representations of ovals. 
In particular, Subiaco and Adelaide hyperovals can be written in a very compact way, contrary to the complicated 
representation when they are written with the help of o-polynomials. 

Some ovals will be inside affine plane $AG(2,q)$ (identified with the field $K$), and some of them will be 
not in $AG(2,q)$, but in a projective plane $PG(2,q)$. We extend affine plane  $K$ to a projective plane
$\overline{K}=K \cup \{u_{\infty} : u \in S\}$, where $u_{\infty}$ is the point at infinity defined by 
the line $L(u,0)$.

\begin{lemma}
\label{lmm:lineoval}
The set of nonzero points $p_i\in K$, $i=1, 2, \dots, q+1$,  forms an oval if and only if
the set of lines $T(p_i x)=1$, $i=1, 2, \dots, q+1$, forms a line oval.
\end{lemma}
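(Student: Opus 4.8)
The plan is to exhibit the assignment $p\mapsto \ell_p:=\{x\in K : T(px)=1\}$ as an incidence-reversing bijection — morally, a correlation of the projective plane $\overline{K}$ — and then simply read off the equivalence ``no three collinear $\iff$ no three concurrent''.

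First I would record the elementary facts. Since $(x,y)=T(xy)$ is a nondegenerate $F$-bilinear form, for $p\in K^*$ the set $\ell_p=\{x\in K:T(px)=1\}$ is an affine line not through $0$ (a coset of the $1$-dimensional $F$-subspace $\{x:T(px)=0\}$); writing $p=\lambda u$ with $\lambda\in F^*$, $u\in S$ gives $\ell_p=L(u,\lambda^{-1})$, so $p\mapsto\ell_p$ is a bijection of $K^*$ onto the $q^2-1$ affine lines missing the origin (injectivity is nondegeneracy, and the two counts agree). Two further remarks carry the whole argument: the relation $T(px)=1$ is symmetric, so $y\in\ell_p\iff p\in\ell_y$ for $p,y\in K^*$; and $\ker(T\colon K\to F)=F$, so the direction spaces $\{x:T(px)=0\}$ and $\{x:T(p'x)=0\}$ coincide — equivalently $\ell_p$ and $\ell_{p'}$ are parallel — exactly when $p$ and $p'$ lie in the same coset of $F^*$ in $K^*$, i.e. on a common line through $0$.

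The core step is the claim: for distinct $p_1,p_2,p_3\in K^*$, the points $p_1,p_2,p_3$ are collinear in $\overline{K}$ if and only if the lines $\ell_{p_1},\ell_{p_2},\ell_{p_3}$ are concurrent in $\overline{K}$. I would prove this by cases on where the common object lies. If $p_1,p_2,p_3$ are collinear on a line not through $0$, that line is $\ell_q$ for a unique $q\in K^*$, so $T(qp_i)=1$ for each $i$ and $q\in\ell_{p_1}\cap\ell_{p_2}\cap\ell_{p_3}$; if they are collinear on a line through $0$, the second remark makes them pairwise $F^*$-proportional, so the $\ell_{p_i}$ are pairwise parallel and meet at a common point at infinity. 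Conversely, a common point $q$ of $\ell_{p_1},\ell_{p_2},\ell_{p_3}$ is either affine — and then $q\neq 0$ since no $\ell_p$ contains $0$, whence $T(p_iq)=1$ puts $p_1,p_2,p_3$ on $\ell_q$ — or at infinity, in which case the $\ell_{p_i}$ are pairwise parallel, forcing the $p_i$ pairwise $F^*$-proportional and hence collinear on the line $p_1F$. Three distinct affine points cannot be collinear via the line at infinity, and the $\ell_{p_i}$ are never concurrent at $0$, so the cases are exhaustive. Assembling everything: $q+1$ distinct nonzero $p_i$ give $q+1$ distinct lines $\ell_{p_i}$ (again nondegeneracy), and ``no three $p_i$ collinear'' is equivalent to ``no three $\ell_{p_i}$ concurrent'', that is, $\{p_i\}$ is an oval iff $\{\,T(p_ix)=1 : i=1,\dots,q+1\,\}$ is a line oval.

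The main obstacle is not any hard computation but the careful handling of the degenerate configurations: lines through the origin on the point side must be matched with parallel classes (concurrency at infinity) on the line side, and getting this matching correct hinges on the clean identification $\ker(T\colon K\to F)=F$ together with the fact that $0$ lies on none of the lines $\ell_p$. Once those boundary cases are pinned down, the rest is routine bookkeeping with the symmetric incidence relation $T(px)=1$.
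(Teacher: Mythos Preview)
Your proof is correct. The underlying idea is the same as the paper's --- the symmetry of the relation $T(px)=1$ transports ``three points on a line'' to ``three lines through a point'' --- but the packaging differs. The paper works directly with difference vectors: from a common point $b$ of $\ell_{p_i},\ell_{p_j},\ell_{p_k}$ it deduces $T((p_i-p_j)b)=T((p_j-p_k)b)=0$, whence $p_i-p_j$ and $p_j-p_k$ are $F$-proportional and the three points are collinear; conversely, from collinearity it produces $b\in K^*$ with $T(p_ib)=T(p_jb)=T(p_kb)=c$ and then splits on $c=0$ (the three lines are parallel, concurrent at infinity) versus $c\neq 0$ (they meet at $b/c$). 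Your explicit correlation $p\mapsto\ell_p$ together with the reciprocity $y\in\ell_p\iff p\in\ell_y$ makes the duality structurally transparent, and you handle the degenerate configuration via the parallel-class characterization $\ell_p\parallel\ell_{p'}\iff p/p'\in F^*$ (using $\ker T=F$) rather than via the value $c$. The two arguments traverse exactly the same case split; yours is more conceptual and self-contained about why the correspondence is a genuine correlation, while the paper's is a shade shorter as a bare-hands computation.
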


\begin{proof}
Let's show that if the set of nonzero points $\{ p_i : i=1, 2, \dots q+1\}$ is an oval then the set
$T(p_i x)=1$, $i=1, 2, \dots, q+1$, forms a line oval. Indeed, if  lines $T(p_i x)=1$, $T(p_j x)=1$, $T(p_k x)=1$
intersect in one point $b$, then
$$T(p_i b)=1, \quad T(p_j b)=1, \quad T(p_k b)=1.$$
Hence $T((p_i -p_j) b)=0$,  $T((p_j -p_k)b)=0$ and vectors $p_i-p_j$ and $p_j-p_k$ are collinear, so
points $p_i$, $p_j$ and $p_k$ are on one line.

Conversely, we show that if a set of lines $T(p_i x)=1$, $i=1, 2, \dots, q+1$, forms a line oval then the
points $p_i\in K$, $i=1, 2, \dots, q+1$,  forms an oval. Assume that points $p_i$, $p_j$ and $p_k$
are on one line.  Then vectors $p_i-p_j$ and $p_j-p_k$ are collinear,
so there is nonzero $b\in K$ such that $T((p_i -p_j) b)=0$,  $T((p_j -p_k)b)=0$.  Therefore,
$$T(p_i  b)= T(p_jb)= T(p_kb)= c$$
for some  $c\in F$. If $c=0$ then points $p_i$, $p_j$ and $p_k$ belong to one line $T(bx)=0$.
Therefore, lines $T(p_i x)=1$, $T(p_j x)=1$ and $T(p_k x)= 1$ are parallel and intersect in a point at infinity.
If $c\not= 0$  then
$$T(p_i  (b/c))=1, \quad T(p_j (b/c))=1, \quad  T(p_k (b/c))= 1,$$
hence $b/c$ belongs to three lines $T(p_i x)=1$, $T(p_j x)=1$ and $T(p_k x)= 1$.
\end{proof}

Let $\mathcal{O} = \{ L(u,g(u)) : u\in S\}$  be a line oval in $K$.
Note that $g(u)=0$ means that the line $L(u,g(u))$ passes through point $0$. Any point of $K$ can belong to 2 or 0
lines of a line oval. Hence if $g(u)=0$  for some $u\in S$, then there are exactly two solutions of the equation $g(u)=0$ with $u\in S$.
We consider the set  $\{\frac{u}{g(u)} : \ u\in S\}$, where we assume that  $\frac{u}{g(u)} = u_{\infty}$ if $g(u)=0$.

\begin{theorem}
\label{thm:oval}
Let a bent function $f$ be defined by Equation (\ref{func}).
\begin{enumerate}
\item
If $g(u)\not= 0$ for all $u\in S$ then the set $\{\frac{u}{g(u)} : \ u\in S\}$ forms an oval in $K$ with nucleus in $0$.

\item
There are $\frac{q(q-1)}{2}$ points  $c\in K$ such that  $g(u) +cu+\bar{c}\bar{u}\not= 0$ for all $u\in S$ and hence the set
$\{\frac{u}{g(u)+cu+\bar{c}\bar{u}}: \ u\in S\}$ forms an oval in
$K$ with nucleus in $0$.

\item
The set $\{\frac{u}{g(u)} : \ u\in S\} \cup \{ 0\}$ forms a hyperoval in $\overline{K}$.
\end{enumerate}
\end{theorem}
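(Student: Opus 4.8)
The plan is to deduce all three statements from Theorem \ref{thm:main} and Lemma \ref{lmm:lineoval}, using the fact that shifting $f$ by a linear term shifts the associated line oval (Proposition \ref{shift}). The first observation is that by Theorem \ref{thm:main} the set $\mathcal{O}=\{L(u,g(u)):u\in S\}$ is a line oval in $K$, so in particular no line of $\mathcal{O}$ passes through $0$ iff $g(u)\neq 0$ for all $u\in S$. In that case each line $L(u,g(u))$ can be renormalized: $T(ux)+g(u)=0$ is the same line as $T\!\big(\frac{u}{g(u)}x\big)=1$, so $\mathcal{O}=\{\,T(p_u x)=1 : u\in S\,\}$ where $p_u=\frac{u}{g(u)}$. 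Since $\mathcal{O}$ is a line oval consisting of $q+1$ lines, Lemma \ref{lmm:lineoval} (the ``only if'' direction, read in the converse form proved there) applies: the $q+1$ points $\{p_u:u\in S\}$ form an oval in $K$. For the nucleus, I would argue that the lines $T(p_u x)=1$ are precisely the tangent lines of this oval at the points $p_u$ in the standard duality $\mathrm{point}\ p \leftrightarrow \mathrm{line}\ T(px)=1$; because the line $L(u,0)$ (i.e. $T(ux)=0$) passes through $0$ for every $u$, and these are the lines through $0$, the point $0$ lies on a tangent of the oval in every direction, which forces $0$ to be the nucleus. This establishes item 1.

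For item 2, I would invoke Proposition \ref{shift}: for $c\in F$ the function $f_c(x)=f(x)+Tr(cx)$ is again a Niho bent function, and its line oval is the translate $\mathcal{O}_c=\tau_c\mathcal{O}$, whose lines are $L(u,g(u)+cu+\bar c\bar u)$ — here one checks that $\tau_c$ sends $T(ux)+g(u)=0$ to $T(u(x+c))+g(u)=0$, i.e. the constant becomes $g(u)+T(uc)=g(u)+cu+\bar c\bar u$. A line of $\mathcal{O}_c$ passes through $0$ exactly when $g(u)+cu+\bar c\bar u=0$ for some $u\in S$, which by the line oval property of $\mathcal{O}_c$ happens precisely when $0\in E(\mathcal{O}_c)=E(\mathcal{O})+c$, i.e. when $c\in E(\mathcal{O})$. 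Since $|E(\mathcal{O})|=q(q+1)/2$ by the dimension count in Section \ref{preliminary}, there are exactly $q^2-q(q+1)/2=q(q-1)/2$ values $c\in K$ with $c\notin E(\mathcal{O})$, and for each of those, item 1 applied to $f_c$ gives that $\{\frac{u}{g(u)+cu+\bar c\bar u}:u\in S\}$ is an oval with nucleus $0$.

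For item 3, when $g(u)=0$ has (necessarily exactly two) solutions in $S$ — say $u_1,u_2$ — the two corresponding lines $L(u_1,0),L(u_2,0)$ pass through $0$, and under the convention $\frac{u_i}{g(u_i)}=u_{i,\infty}$ these contribute the two points at infinity $u_{1,\infty},u_{2,\infty}$ in $\overline K$. The same renormalization argument as in item 1 shows that the $q+1$ lines of $\mathcal{O}$, viewed in $PG(2,q)$, are the tangent lines of the point set $\{\frac{u}{g(u)}:u\in S\}\subseteq\overline K$, and since $\mathcal{O}$ is a line oval its ``dual point set'' has $q+1$ points no three collinear, i.e. an oval in $PG(2,q)$; adjoining the nucleus $0$ (again the common point of all lines $T(ux)=0$, which remain well-defined tangents after renormalization for the $u$ with $g(u)\neq 0$, and for $u_1,u_2$ the line $T(u_i x)=0$ through $0$ is the tangent at the infinite point $u_{i,\infty}$) yields a hyperoval in $\overline K$. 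The main obstacle I anticipate is the careful bookkeeping at infinity: verifying that the renormalization $L(u,g(u))\mapsto(T(p_ux)=1)$ and the duality of Lemma \ref{lmm:lineoval} extend correctly to the completed plane $\overline K$, that the nucleus is really $0$ (equivalently that the line at infinity is the dual nucleus of $\mathcal{O}$, which is exactly the hypothesis built into the definition of a line oval in an affine plane), and that the two tangent lines through $0$ in the directions $u_1,u_2$ match the tangents at $u_{1,\infty},u_{2,\infty}$. Everything else is a direct translation of Theorems \ref{thm:main}, Lemma \ref{lmm:lineoval} and Proposition \ref{shift}.
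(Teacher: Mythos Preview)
Your approach is essentially the same as the paper's: use Theorem \ref{thm:main} to get the line oval, renormalize $L(u,g(u))$ to $T(\tfrac{u}{g(u)}x)=1$, apply Lemma \ref{lmm:lineoval}, and for part 2 shift by $c\notin E(\mathcal{O})$ and count. One point where the paper is cleaner is the nucleus: rather than arguing via tangent lines and duality (your claim that ``the lines $T(p_u x)=1$ are the tangent lines of the oval'' is not what you actually need, and your assertion that each $L(u,0)$ is tangent is left unjustified), the paper simply observes that each line through $0$ is of the form $uF$ and contains exactly one point $\tfrac{u'}{g(u')}$ of the oval, so adjoining $0$ gives a hyperoval directly.
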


\begin{proof}
1) By Theorem \ref{thm:main} the set $\mathcal{O} = \{ L(u,g(u)) : u\in S\}$  is a line oval.
Therefore, the set of lines $T(\frac{u}{g(u)}x)=1$ forms line oval, and by Lemma \ref{lmm:lineoval} the set
$\{\frac{u}{g(u)}\}$ forms an oval.
We get an hyperoval by adding point $0$ to this oval, since on the line joining $0$ and $u$ there is only one
point from our oval.

2) Recall that $g(u)=0$ for some $u\in S$ if and only if the line $L(u,g(u))$ passes through the point $0$.
Since $E(\mathcal{O})$ does not cover all elements of $K$, we can choose $c\in K$ such that
$c\not\in E(\mathcal{O})$. There are  $\frac{q(q-1)}{2}$ such points  $c\in K$.
Therefore $c\not\in L(u,g(u))$ for any $u \in S$, which means $T(cu)+g(u)\not=0$
for any $u \in S$.   Consider bent function $f_c =f + Tr(cx)$.
Then $f_c$ is linear on elements of the spread $\{ uF : u \in S \}$ and its corresponding function
$g_c(u) = g(u)+ cu+\bar{c}\bar{u} = g(u) + T(cu) \not=0$ for any $u \in S$.
Now we can apply part 1) for the function $f_c$.

3) It follows from previous considerations. If $g(u)=0$ for some $u \in S$ then two points of hyperoval are
on the line at infinity. Then we can reason as in Lemma \ref{lmm:lineoval}.
\end{proof}

\begin{remark}
We note that, if $g(u)\not= 0$ for all $u\in S$, then the function $\frac{1}{g(u)}$ is a $\rho$-polynomial
in the sense of \cite{Deorsey2015}.
\end{remark}

\begin{corollary}
\label{cor:niho}
Let $f$ be a Niho bent function of the form $f(x)=Tr (a x^{d_1} +x^{d_2})$, $a+\bar{a}=1$, $d_1=(2^m-1)\frac{1}{2}+1$.
Then $g(u)=1+u^{d_2} +\bar{u}^{d_2}$.

1) If $g(u)\not= 0$ for all $u\in S$ then the set $\{\frac{u}{1+u^{d_2} +\bar{u}^{d_2}}: \ u\in S\}$ forms an oval in
$K$ with nucleus in $0$.

2) If for some $c\in K$ one has  $g(u)+cu + \bar{c}\bar{u}\not= 0$ for all $u\in S$ then the set
$\{\frac{u}{1+u^{d_2} +\bar{u}^{d_2}+cu+\bar{c}\bar{u}}: \ u\in S\}$ forms an oval in
$K$ with nucleus in $0$.
\end{corollary}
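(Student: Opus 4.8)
The plan is to reduce the statement to Theorem~\ref{thm:oval}. Since a Niho bent function is by definition linear on each spread element $uF$, Theorem~\ref{thm:oval} applies to $f$, so the only substantive task is to identify the function $g\colon S\to F$ attached to $f$ through Equation~(\ref{func}) and then substitute it into parts~1 and~2 of that theorem.

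First I would compute $g(u)$. The exponents $d_1,d_2$ are Niho exponents in normalized form, hence $d_i\equiv 1\pmod{2^m-1}$, so $\lambda^{d_i}=\lambda$ for every $\lambda\in F$ and consequently $(\lambda u)^{d_i}=\lambda u^{d_i}$ for $\lambda\in F$, $u\in S$. Using $Tr=tr\circ T$ together with the $F$-linearity of $T$, this gives
$$f(\lambda u)=Tr\bigl(\lambda(a u^{d_1}+u^{d_2})\bigr)=tr\bigl(\lambda\,T(a u^{d_1}+u^{d_2})\bigr)=tr\bigl(\lambda(a u^{d_1}+\bar{a}\bar{u}^{d_1}+u^{d_2}+\bar{u}^{d_2})\bigr),$$
so that $g(u)=a u^{d_1}+\bar{a}\bar{u}^{d_1}+u^{d_2}+\bar{u}^{d_2}$.

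Next I would simplify the $d_1$-contribution on the unit circle. For $u\in S$ we have $u^{2^m+1}=1$, hence $u^{2^m-1}=u^{-2}$; since squaring is a bijection of the odd-order group $S$, the square root of $u^{-2}$ in $S$ is $u^{-1}$, so $u^{d_1}=(u^{2^m-1})^{1/2}\,u=u^{-1}u=1$ and likewise $\bar{u}^{d_1}=\overline{u^{d_1}}=1$. Therefore $g(u)=a+\bar{a}+u^{d_2}+\bar{u}^{d_2}=1+u^{d_2}+\bar{u}^{d_2}$ by the hypothesis $a+\bar{a}=1$, which is the first assertion. (This agrees with the explicit evaluations of $g$ carried out after Corollary~\ref{dual}.)

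Finally, statements~(1) and~(2) drop out immediately: substituting $g(u)=1+u^{d_2}+\bar{u}^{d_2}$ into part~1 of Theorem~\ref{thm:oval} yields~(1), and into part~2 yields~(2) — where in fact Theorem~\ref{thm:oval}(2) already guarantees the existence of such a $c$, with $\frac{q(q-1)}{2}$ admissible choices, namely the $c\notin E(\mathcal{O})$. I do not expect any genuine obstacle here: the whole content is the short evaluation $u^{d_1}=1$ on $S$, after which everything is bookkeeping and a direct appeal to the already-established Theorem~\ref{thm:oval}.
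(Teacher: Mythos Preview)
Your proposal is correct and follows essentially the same approach as the paper: the paper's proof is the single line ``It follows from the previous theorem,'' relying on the fact that the evaluation $g(u)=1+u^{d_2}+\bar u^{d_2}$ was already carried out in the text after Corollary~\ref{dual}. You have simply made that computation self-contained within the proof, which is fine; the reduction to Theorem~\ref{thm:oval} is identical.
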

\begin{proof}
It follows from the previous theorem.
\end{proof}

Let $f$ be a Niho bent function of the form $f(x)=Tr (a x^{d_1})$, $a+\bar{a}=1$,
$d_1=(2^m-1)\frac{1}{2}+1$.
Then $g(u)=1$ and the set $\{ u: \ u\in S\}$ forms an oval in $K$ with nucleus in $0$.

Let $f$ be a Niho bent function of the form $f(x)=Tr (a x^{d_1} +x^{d_2})$, $a+\bar{a}=1$, $d_1=(2^m-1)\frac{1}{2}+1$,
$d_2=(2^m-1)3+1$.
Then
$$g(u)=1+u^5+\bar{u}^5.$$
It corresponds to a Subiaco hyperoval \cite{Deorsey2015,Helleseth2012}. 
Indeed, consider a $\rho$-polynomial
$$\rho(x) = \frac{x^5}{x^{10}+x^6+x^5+x^4+1}$$
from \cite{Deorsey2015} for a Subiaco hyperoval. Then define
$$g'(u)=\frac{1}{\rho(u)} = 1+u^5+\bar{u}^5 +u +\bar{u}. $$
This function  corresponds to the bent function $f(x)=Tr (a x^{d_1} +x^{d_2}+x)$.
Therefore, the set 
$$\left\{\frac{u}{1+u^5+\bar{u}^5 }: \ u\in S \right\} \cup \{ 0\}$$ 
forms a Subiaco hyperoval. 

Let $f$ be a Niho bent function of the form $f(x)=Tr (a x^{d_1} +x^{d_2})$, $a+\bar{a}=1$, $d_1=(2^m-1)\frac{1}{2}+1$,
$d_2=(2^m-1)\frac{1}{6}+1$,
$m$ even.
Then $g(u)=1+u^{2/3}+ \bar{u}^{2/3}$. For $u\not= 1$ we have  $g(u)= \frac{u+\bar{u}}{u^{1/3}+\bar{u}^{1/3}} \not= 0$.
Hence the set
$$\left\{\frac{u}{1+u^{2/3}+ \bar{u}^{2/3}}: \ u\in S \right\}$$
forms an oval in $K$ with nucleus in $0$.
It corresponds to an Adelaide hyperoval \cite{Deorsey2015,Helleseth2012}.
Indeed, consider a $\rho$-polynomial
$$\rho(x) = \frac{x(x^{\frac{1}{3}} + 1)^3}{(x+1)^3}, \quad \rho(1)=1$$
from \cite{Deorsey2015} for an Adelaide hyperoval. Then define
$$g'(u)=\frac{1}{\rho(u)} = \frac{(u+1)^3}{u(u^{\frac{1}{3}} + 1)^3} =
\frac{(u^{\frac{2}{3}} + u^{\frac{1}{3}} + 1)^3}{u} = 1+u^{2/3}+ \bar{u}^{2/3} +u +\bar{u}. $$
This function corresponds to the bent function $f(x)=Tr (a x^{d_1} +x^{d_2}+x)$.
Therefore, the set
$$\left\{\frac{u}{1+u^{2/3}+ \bar{u}^{2/3}}: \ u\in S \right\} \cup \{ 0\}$$
gives new representation of an Adelaide hyperoval in $K$.

We showed that Niho bent functions define ovals. Now we show that conversely from any oval in
$K$ one can construct Niho bent function.

\begin{theorem}
\label{thm:oval-bent}
Let ${\mathcal O}$ be an oval in $K$ with nucleus in $0$. Let $f(x)=tr(x/v)$ for $x\in vF$, where $v\in {\mathcal O}$.
Then $f(x)$ is a Niho bent function and
$$f(x)= \sum_{v\in {\mathcal O}} [(x^{q^2-q} - v^{q^2-q})^{q^2-1} +1] \sum_{j=0}^{m-1} (x/v)^{2^j}.$$
\end{theorem}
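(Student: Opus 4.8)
The plan is to reduce bentness to Theorem~\ref{thm:main} through Lemma~\ref{lmm:lineoval}, and then verify the closed formula by a pointwise analysis of the displayed sum. \emph{Setting up $f$.} Since the nucleus of $\mathcal{O}$ equals $0$, every one of the $q+1$ lines of $PG(2,q)$ through $0$ is a tangent of $\mathcal{O}$, hence meets $\mathcal{O}$ in exactly one point; in particular each spread element $uF$, $u\in S$, contains a unique point $v_u\in\mathcal{O}$, say $v_u=\lambda_u u$ with $\lambda_u\in F^*$. Thus the sets $vF$, $v\in\mathcal{O}$, partition $K\setminus\{0\}$, so $f$ is well defined (with $f(0)=0$), and for $x=\lambda u\in v_uF$ we have $x/v_u=\lambda/\lambda_u\in F$ and $f(\lambda u)=tr(\lambda/\lambda_u)=tr(\lambda g(u))$ with $g(u)=1/\lambda_u=u/v_u\in F$. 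Hence $f$ is $\Ft$-linear on each element of the Desarguesian spread $\{uF:u\in S\}$, i.e.\ it has the form~(\ref{func}).

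\emph{Bentness.} Because $\lambda_u\in F$ one gets $T(v_ux)=\lambda_u T(ux)$, so
$$L(u,g(u))=\{x\in K:T(ux)+g(u)=0\}=\{x\in K:T(v_ux)=1\}.$$
Therefore $\{L(u,g(u)):u\in S\}$ is precisely the family of the $q+1$ lines $\{x:T(vx)=1\}$, $v\in\mathcal{O}$. By the direction ``oval $\Rightarrow$ line oval'' of Lemma~\ref{lmm:lineoval} applied to the oval $\mathcal{O}$, this family is a line oval in $K$; hence Theorem~\ref{thm:main} shows $f$ is bent, and being linear on $\{uF\}$ it is a Niho bent function. (Incidentally Theorem~\ref{thm:main} also gives $\tilde f=1+\chi_{E(\mathcal{O}')}$ for $\mathcal{O}'$ this line oval.)

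\emph{The closed formula.} I would analyse the scalar factor $c_v(x):=(x^{q^2-q}-v^{q^2-q})^{q^2-1}+1$. In $K$ one has $w^{q^2-1}\in\{0,1\}$, equal to $0$ only for $w=0$, so $c_v(x)=1$ if $x^{q^2-q}=v^{q^2-q}$ and $c_v(x)=0$ otherwise. For $x\neq0$, $x^{q^2-q}=v^{q^2-q}$ is equivalent to $(x/v)^{q^2-q}=1$, i.e.\ to $x/v$ lying in the unique subgroup of $K^*$ of order $\gcd(q^2-q,q^2-1)=q-1$, which is $F^*$; equivalently, in polar language, $x^{q^2-q}=x/\bar x$ is the square of the unit-circle part of $x$ and squaring permutes the odd-order group $S$, so $c_v(x)=1$ exactly when $x\in vF$. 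For $x=0$, $x^{q^2-q}=0\neq v^{q^2-q}$ for every $v\in\mathcal{O}$, so $c_v(0)=0$. Consequently the displayed sum equals $0$ at $x=0$ (matching $f(0)$), and for $x\neq0$ only the term $v=v_x$ with $x\in v_xF$ survives, for which $x/v_x\in F$ and $\sum_{j=0}^{m-1}(x/v_x)^{2^j}=tr(x/v_x)=f(x)$. This proves the identity.

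\emph{Main obstacle.} The two delicate points, which I would treat most carefully, are: (i) that the hypothesis ``nucleus $=0$'' is exactly what forces each spread element to meet $\mathcal{O}$ in a single point, which is needed for $f$, $g$ and the reduction to Theorem~\ref{thm:main} to make sense; and (ii) the bookkeeping around $x^{q^2-q}$ --- that it vanishes only at $x=0$, that $c_v(x)$ is the indicator of $x\in vF$, and that the terms it annihilates are precisely those with $x/v\notin F$, so that replacing $\sum_j(x/v)^{2^j}$ by $tr(x/v)$ on the surviving term is legitimate. Everything else is immediate from Lemma~\ref{lmm:lineoval} and Theorem~\ref{thm:main}.
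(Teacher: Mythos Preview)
Your proof is correct and follows essentially the same route as the paper: parametrize the oval by the spread via $v_u=u/g(u)$, reduce bentness to Theorem~\ref{thm:main}, and verify the closed formula by showing that $x^{q^2-q}=v^{q^2-q}$ is the indicator of $x\in vF$. Your account is somewhat more explicit than the paper's at two points---you spell out the passage through Lemma~\ref{lmm:lineoval} to obtain the line oval before invoking Theorem~\ref{thm:main}, and you give a direct $\gcd$/polar argument for $c_v(x)=\chi_{vF}(x)$ (and treat $x=0$ separately)---but the underlying ideas are the same.
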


(In other words, if ${\mathcal O}$ is an oval in $K$ with nucleus in $0$ then the function defined by 
$f(x)=tr(\lambda)$ for $x=\lambda v$, $v \in {\mathcal O}$, is a Niho bent function.  
Note that if $x/v \in F$ then  $\sum_{j=0}^{m-1} (x/v)^{2^j}$ is the trace function $tr(x/v)$.)

\begin{proof}
Let ${\mathcal O}$ be an oval in $K$ with nucleus in $0$. Then any element of $v \in {\mathcal O}$ can be
written as $v=u\rho(u)=\frac{u}{g(u)}$ for some $\rho$-polynomial $\rho$ and corresponding function $g(u)$.
Let $x=\lambda u$, $u\in S$, $\lambda\in F$. Then  function
$$f(x)=tr(\lambda g(u)) = tr \left(\frac{x}{u} \cdot \frac{u}{v} \right) = tr\left(\frac{x}{v}\right)$$
is bent.

Now we should produce a formula for the function $f(x)$. If $v=\frac{u}{g(u)}$ then $\bar{v}^{-1} = u g(u)$.
Hence $u= \sqrt{v\bar{v}^{-1}} = \sqrt{v^{1-q}}= \sqrt{v^{q^2-q}}$.
On the other hand, if $x=\lambda u' \not= 0$ then $u'= \sqrt{x/\bar{x}} = \sqrt{x^{1-q}}= \sqrt{x^{q^2-q}}$.
So $u=u'$ if and only if $v^{q^2-q}=x^{q^2-q}$.
Therefore, a function which is equal to $tr(x/v)$ for $x\in vF$, and zero otherwise, can be written as
$$[(x^{q^2-q} - v^{q^2-q})^{q^2-1} +1] \sum_{j=0}^{m-1} (x/v)^{2^j}.$$
We note that $f(x)$ is a sum of such functions for $v \in {\mathcal O}$.
\end{proof}

\begin{example}
\label{ex:fisher}
Fisher and  Schmidt \cite{Fisher2006} showed the set $\{ u+u^3+u^{-3} : u\in S\} \cup \{ 0\}$ forms a hyperoval
in $K$. It is the Payne hyperoval when $m$ is odd, and the Adelaide hyperoval when $m$ is even.
Therefore, this hyperoval determines a new bent function
$$f(x)= \sum_{u\in S} [(x^{q^2-q} - (u+u^3+u^{-3})^{q^2-q})^{q^2-1} +1] \sum_{j=0}^{m-1} (x/(u+u^3+u^{-3}))^{2^j}.$$
\end{example}


\section{Bent functions linear on elements of spreads}
\label{spreads}

In this section we study bent functions which are linear on the elements of an arbitrary spread. 
The main result of the section is Theorem \ref{main2} where bent functions linear on the elements of a spread 
are shown to be in one-to-one correspondence with line ovals in an appropriate affine plane. 
First we recall some notation and facts on spreads.

\subsection{Notation and facts}
\label{notations}

Let $F=\mathbb{F}_{2^m}$ and consider $F \times F$ as a $2m$-dimensional vector space over $\Ft$.
We recall that a spread of $F \times F$ is a family of $2^m+1$ subspaces of dimension
$m$ such that every nonzero point of $F \times F$ lies in a unique subspace.
Every spread can be obtained from a, usually not unique, quasifield \cite{Dem}.

\begin{definition}
A system $Q=(V, +,\circ)$, with $|V|$ finite, is a (right) prequasifield if the following axioms hold:

(1) $(V,+)$ is an Abelian group, with additivity identity $0$;

(2) $(V^*, \circ )$ is a quasigroup;

(3) $(x+y)\circ z=x\circ z+y\circ z$ for all $x$, $y$, $z\in V$;

(4) $x\circ 0=0$ for all $x\in V$. \\
Prequasifield is a quasifield if it has a multiplicative identity. 
\end{definition}
Similarly one can define left prequasifield,
where left distributivity satisfies in place of right distributivity. Right prequasifield is a presemifield if it is
also a left prequasifield.

Any finite prequasifield can be obtained from a finite field $F$ preserving the addition $+$ and defining
new operation $\circ$, so we denote this prequasifield by $Q=(F,+,\circ )$.

The kernel $K(Q)$ of quasifield $Q=(F,+,\circ )$ consists of all the elements $k\in Q$ such that
$k\circ (x\circ y)= (k\circ x) \circ y$ and $k\circ (x+y) = k\circ x + k\circ y$ for all $x, y \in Q$.
The kernel $K(Q)$ is a field and $Q$ is a vector space over $K(Q)$.

Let  $tr$ be the absolute trace function from $F$ into $\Ft$.
We define a $\mathbb{F}_2$-bilinear form $B: F \times F \rightarrow \mathbb{F}_2$ by
$$B(x,y)=tr(xy),$$
and an alternating form on  $(F \times F) \times (F \times F)$ by
$$\langle (x,y),(x',y') \rangle = B(x,y') - B(y,x').$$

If $L: F \rightarrow  F$ is a $\mathbb{F}_2$-linear map, its adjoint operator $L^*$ with respect to
the form $B$ is defined as a unique linear operator satisfying the following condition:
$$B(L^*(x),y)=B(x,L(y)), \quad {\rm for \ all \ } x,y \in F.$$

Let $Q=(F, +,\circ)$ be a finite prequasifield.
With $Q$ one can associate a spread $\Sigma(Q)$, consisting of subspaces
$\{ (0,y) \mid y \in Q\}$  and   $\{ (x, x\circ z) \mid x \in Q \}$, $z \in Q$.
By modifying the multiplication, the prequasifield can be turned in a quasifield in such a way
that the respective associated spreads are the same.
Let's find the spread $\Sigma(Q)^{\perp}$, which is orthogonal (dual) to the spread $\Sigma(Q)$
with respect to the alternating form $\langle \cdot , \cdot \rangle$.
Define $R_z(x) = x\circ z$.
Then the spread $\Sigma(Q)^{\perp}$ can be written as spread, consisting of subspaces
$\{ (0,y) \mid y \in Q\}$  and   $\{ (x, R_z^*(x)) \mid x \in Q \}$, $z \in Q$, since
subspaces $\{ (x, R_z(x)) \mid x \in Q \}$ and $\{ (x, R_z^*(x)) \mid x \in Q \}$ are perpendicular.
Therefore, this spread is associated with other prequasifield $Q^t =(F,+, \star )$, which is called transpose
prequasifield and defined by  multiplication
$$x\star z =R_z^*(x).$$

In other words, $\Sigma(Q)^{\perp}=\Sigma(Q^t)$, where the  transpose prequasifield $Q^t =(F,+, \star )$ is defined by
\begin{equation}
\label{trans}
B(x \star z, y) = B(x,y\circ z).
\end{equation}
Therefore, relation (\ref{trans}) determines connection between prequasifield $Q =(F,+, \circ )$ and its
transpose $Q^t =(F,+, \star )$.

Let $Q=(F, +, \circ)$ be a  right prequasifield with respect to an operation $\circ$.
We define the dual left prequasifield $Q^d = (F, +, *)$  by operation
$$x * y = y\circ x.$$

Two presemifields $(S,+,\circ)$ and $(S^\prime,+,\circ^\prime)$
are said to be \emph{isotopic} if there exist three bijective linear mappings
$L$, $M$, $N$ from $S$ to $S^\prime $ such that
$$L(x\circ y) = M(x) \circ^\prime N(y), \ \forall x,y\in S.$$

In case of presemifields, using operations $S^d$ and $S^t$ one can get at most 6 isotopy classes of presemifields,
which is called the Knuth \cite{Knuth65,Lav2011} orbit  $\mathcal{K}(S)$ (or Knuth derivatives) of the presemifield $S$:
$$\mathcal{K}(S) = \{ [S],[S^d],[S^t],[S^{dt}],[S^{td}],[S^{dtd}]=[S^{tdt}]\}.$$

A presemifield $S=(F, +, \circ)$ is called commutative, if the operation $\circ$ of multiplication is commutative.
A prequasifield is called symplectic, if its associated spread is symplectic
(that is, every subspace from spread is isotropic with respect to the
alternating form $\langle \cdot , \cdot \rangle$).  This means
$$\langle (x,x \circ z),(y,y\circ z) \rangle = 0$$
for any $x, y, z \in F$. Equivalently,
\begin{equation}
\label{B}
B(x \circ z, y) = B(x,y \circ z)
\end{equation}
for any $x, y, z \in F$.

Equality (\ref{B}) means that all right multiplication mappings $R_z(x) = x\circ z$  of a symplectic prequasifield
are self-adjoint with respect to $B$.

A presemifield $S$ is commutative if and only if $S=S^d$, and
a presemifield $S$ is symplectic if and only if $S=S^t$.
Therefore, Knuth orbit of a commutative (symplectic) presemifield contains at most
three elements. If presemifield $S$ is commutative then $S^{td}$ is symplectic,
and if $S$ is symplectic  then $S^{dt}$ is commutative.
If presemifield $S$ is commutative then its transpose $S^{t}$ is dual to
symplectic presemifield $S^{td}$.

Starting from a symplectic presemifield $(F, +, \circ)$, one can construct a commutative presemifield
in the following way \cite{Kantor2003,Knarr}.
Consider the linear map $L_z : F \rightarrow F$, $L_z(x)=z\circ x$. Let $L_z^*$ be the adjoint operator
of $L_z$ with respect to the form $B$:
$$B(z\circ x, y) = B(L_z(x),y) = B(x, L_z^*(y)).$$
We introduce new operation $*$ by
$$z*y=L_z^*(y),$$
so
$$B(z \circ x, y) = B(x, z * y).$$
Then  $(F, +, *)$ is a commutative presemifield.
Similarly, starting from commutative presemifield $(F, +, *)$ and putting $L_z(x)=z*x$,
one can get a symplectic presemifield $(F, +, \circ)$:
$$B(z*x, y) =  B(L_z(x),y) = B(x, L_z^*(y)) = B(x, z\circ y).$$


Let $\Sigma$ be a spread of $2m$-dimensional vector space $V = F \oplus F$ over $\mathbb{F}_2$.
The affine plane $\mathcal{A}(\Sigma)$ has as points the vectors of $V$ and as lines the cosets $S+v$,
where $S\in \Sigma$ and $v\in V$.
If  spread $\Sigma(Q)$ is associated with a prequasifield $Q$, then affine plane
$\mathcal{A}(Q) = \mathcal{A}(\Sigma(Q))$ has as points the vectors of $V$ and as lines
$\{ (c,y) \mid y \in F\}$ and $\{ (x,x\circ b +a) \mid x \in F\}$, $a, b, c \in F$.
These lines can be described by equations $x=c$ and $y=x\circ b +a$.

Let $\mathcal{O}$ be a line oval in a affine plane $\mathcal{A}$ (so corresponding line oval in a projective 
plane has   the line at infinity as nucleus),
and $E(\mathcal{O})$ the set of points which are on the lines of the line oval $\mathcal{O}$:
$$E(\mathcal{O}) = \{ (x,y) \in V \mid (x,y) \ \rm{ is \ on \ a \ line \ of \ } \mathcal{O} \}.$$
Then each point of $E(\mathcal{O})$ belongs \cite{Hir} to two lines of $\mathcal{O}$ and
$$|E(\mathcal{O})|=q(q+1)/2 = 2^{2m-1}+2^{m-1}.$$

 Let $Q=(F,+,\circ )$ be a (right) prequasifield. We call mapping $G: F\rightarrow F$  an o-polynomial for
prequasifield $Q$ if $G$ is a permutation and the function $x\mapsto G(x) + x\circ b$ is $2$-to-$1$ function
for any nonzero  $b\in F$.

In the  affine plane $\mathcal{A} (Q)$, if $G(x)$ is an o-polynomial then the curve $y=G(x)$ intersects
with line  $y=x\circ b +a$ in one point if $b=0$, and $0 $ or $2$ points if $b \ne 0$.


The \emph {bivariate representation} of Boolean functions makes sense only when $n$ is an even integer,
which is the case for bent functions. For $n=2m$ we identify $\mathbb{F}_{2^n}$ with
$\mathbb{F}_{2^m}\times \mathbb{F}_{2^m}$ and consider the input to $f$ as an ordered pair $(x,y)$ of
elements of $\mathbb{F}_{2^m}$.
The function  $f$ being Boolean, its bivariate representation can be written in the (non unique) form
$f(x,y)=tr (P(x,y))$, where $P(x,y)$ is a polynomial in two variables over $\mathbb{F}_{2^m}$ and $tr$ is
the trace function from $\mathbb{F}_{2^m}$ to $\mathbb{F}_2$.
In this section we shall only consider functions in their bivariate representation.


\subsection{Spreads and bent functions}
\label{bent-spreads}

We recall the construction of bent functions from \cite{Carlet2015}.
Let $R_z : F \rightarrow F$ be a linear function for any $z \in F$.
Consider a collection of subspaces $\{ (x,R_z(x))  \mid x \in F \}$, $z\in F$,
and $\{ (0,y)  \mid y \in F \}$.
Let these subspaces be a spread, that is, the mapping $z \mapsto R_z(x)=y$ be a permutation of $F$
for any nonzero $x \in F$.
Denote by $\Gamma_x$ the inverse of this bijection, that is, $\Gamma_x(y) =z$.
A Boolean function on $F \times F$ is linear on the elements of the spread if and only if there exists
a function $G: F \rightarrow F$ and an element $\mu \in F$ such that, for every $y \in F$,
\begin{equation}
\label{mu}
f(0,y) = tr (\mu y),
\end{equation}
and for every $x, z \in F$,
\begin{equation}
\label{G}
f(x,R_z(x)) = tr (G(z)x).
\end{equation}

Up to EA-equivalence, one can assume that $\mu =0$. Indeed, one can add the linear function
$g(x,y)=tr(\mu y)$ to $f$; this changes $\mu$ into 0 and $G(z)$ into $G(z)+R_z^*(\mu)$,
where $R_z^*$ is the adjoint operator of $R_z$, since for $y=R_z(x)$ one has
$tr (\mu y) = B(\mu, y) = B(\mu, R_z(x)) = B(R_z^*(\mu),x) = tr (R_z^*(\mu)x)$.

We take $\mu =0$ in expression (\ref{mu}), and relation (\ref{G}) becomes
\begin{equation}
\label{GG}
f(x,y) = tr (G(z)x) = tr (G(\Gamma_x(y))x).
\end{equation}

\begin{theorem}[\cite{Carlet2015}, Theorem 2]
\label{car}
Consider a spread of $F \times F$ whose elements are $2^m$ subspaces of the form
$\{ (x,R_z(x))  \mid x \in F \}$, where, for every $z \in F$, function $R_z$ is linear, and
the subspace $\{ (0,y)  \mid y \in F \}$.
For every $x \in F^*$, let us denote by $\Gamma_x$ the inverse of the permutation
$z \mapsto R_z(x)=y$. A Boolean function defined by equation $(\ref{GG})$ is bent if and only if
$G$ is a permutation and, for every $b\neq 0$ the function $G(z) + R_z^*(b)$
is $2$-to-$1$, where $R_z^*$ is the adjoint operator of $R_z$.
\end{theorem}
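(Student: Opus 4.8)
The plan is to compute the Walsh transform of $f$ directly from the bivariate expression $(\ref{GG})$ and to read the bent condition off the result. I would write, for $(a,b)\in F\times F$,
\[
W_f(a,b)=\sum_{(x,y)\in F\times F}(-1)^{f(x,y)+tr(ax+by)},
\]
and split the outer sum according to whether $x=0$ or $x\in F^*$.

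For the slice $x=0$: since $(\ref{GG})$ incorporates $f(0,y)=tr(\mu y)$ with $\mu=0$, this slice equals $\sum_{y\in F}(-1)^{tr(by)}$, which is $q=2^m$ when $b=0$ and $0$ otherwise. For the slice $x\in F^*$ I would invoke the spread hypothesis: for each fixed $x\in F^*$ the map $z\mapsto R_z(x)$ is a bijection of $F$, so substituting $y=R_z(x)$ rewrites this slice as $\sum_{x\in F^*}\sum_{z\in F}(-1)^{tr(G(z)x)+tr(ax)+tr(bR_z(x))}$. The key step is to move $R_z$ off $x$ via the adjoint: $tr(bR_z(x))=B(b,R_z(x))=B(R_z^*(b),x)=tr(R_z^*(b)x)$, so the exponent collapses to $tr\big((G(z)+a+R_z^*(b))\,x\big)$. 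Summing over $x\in F^*$ gives $q-1$ if $G(z)+a+R_z^*(b)=0$ and $-1$ otherwise, and then summing over $z$ gives $q\,M_{a,b}-q$, where $M_{a,b}:=\#\{z\in F:\ G(z)+R_z^*(b)=a\}$ (characteristic $2$ absorbs the sign).

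Collecting the two slices yields $W_f(a,b)=q\big(M_{a,b}-1+[\,b=0\,]\big)$, and from here the equivalence is immediate. When $b=0$ one has $R_z^*(0)=0$, so $W_f(a,0)=q\,M_{a,0}=q\cdot\#\{z:G(z)=a\}\ge 0$; hence $W_f(a,0)\in\{\pm q\}$ for every $a$ holds if and only if $M_{a,0}=1$ for every $a$, i.e.\ $G$ is a permutation (and conversely a permutation forces $W_f(a,0)=q$). When $b\neq 0$, the requirement $W_f(a,b)=q(M_{a,b}-1)\in\{\pm q\}$ for every $a$ is equivalent to $M_{a,b}\in\{0,2\}$ for every $a$, that is, to the map $z\mapsto G(z)+R_z^*(b)$ being $2$-to-$1$. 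Putting the two cases together gives the stated characterization.

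I do not expect a genuine obstacle: the argument is a bookkeeping computation. The points that require care are the separate contribution of the $x=0$ term, the $-1$ that appears because the inner sum runs over $F^*$ rather than $F$, and the consistency of the adjoint — $R_z^*$ must be taken with respect to the same form $B$ that underlies the additive character $tr(ax+by)$ used in the Walsh transform. One should also record that taking $\mu=0$ in $(\ref{mu})$ is harmless, since adding the linear function $tr(\mu y)$ preserves bentness and merely replaces $G(z)$ by $G(z)+R_z^*(\mu)$, as noted just before the theorem.
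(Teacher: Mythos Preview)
Your proposal is correct and follows essentially the same route as the paper: the identical Walsh-transform computation (splitting off $x=0$, substituting $y=R_z(x)$, passing to the adjoint, and reading off $W_f(a,b)=q(\delta_0(b)-1+M_{a,b})$) appears verbatim in the proof of Theorem~\ref{main}, part 1) $\Leftrightarrow$ 2). The theorem itself is quoted from \cite{Carlet2015} without a separate proof, but the paper's own argument in Theorem~\ref{main} matches yours step for step.
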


An example of such function $G(x)$ was introduced in \cite{AbMes2015,Ces2015} in a particular case
of spreads related to symplectic semifields.

Let $D\subseteq F\times F$. We denote by $\chi_D(x,y)$ the characteristic function of $D$ and define
$$\bar{\chi}_D(x,y) = \chi_D(y,x) =
\left\{ \begin{array} {l}
1, \ {\rm if} \ (y,x)\in D, \\ 0, \ {\rm if} \ (y,x)\not\in D.
\end{array} \right. $$

Note that we changed the order of variables $x$ and $y$.
\begin{theorem}
\label{main}
Let $Q=(F, +, \circ)$ be a  prequasifield, $\Sigma(Q)$ be its associated spread,
and $Q^t=(F, +, \star)$ be its transpose prequasifield.
Let a Boolean function $f(x,y)$ be defined by equation $(\ref{GG})$.
Then the following statements are equivalent:

\begin{enumerate}
\item
A Boolean function defined by equation $(\ref{GG})$  is  bent;

\item
The function $G(z) + b\star z$ is  2-to-1 for all $b \in F^*$, and $G(z)$ is a bijection;


\item
$\mathcal{O} = \{ x=0\}  \cup \{ y=G(z) +x\star z \mid z \in F\}$ is a line oval for $\mathcal{A}(Q^t)$.
\end{enumerate}
In this case the dual bent function for $f(x,y)$ is
$$\tilde{f}(x,y) = 1 + \bar{\chi}_{E(\mathcal{O})}(x,y) = 1 + \chi_{E(\mathcal{O})}(y,x) =
\left\{ \begin{array} {l}
0, \ {\rm if} \ (y,x)\in E(\mathcal{O}), \\ 1, \ {\rm if} \ (y,x)\not\in E(\mathcal{O}),
\end{array} \right. $$
where $E(\mathcal{O})$ is the set of points which are on the lines of the line oval $\mathcal{O}$. Moreover,
$$\tilde{f}(x,y) = y^{q-1}\prod_{z \in Q} (y \star z + x + G(z))^{q-1}.$$
\end{theorem}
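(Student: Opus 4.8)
The plan is to mirror the structure of the proof of Theorem~\ref{thm:main}, but now working inside the affine plane $\mathcal{A}(Q^t)$ attached to the transpose prequasifield, since the ``linear'' coordinate in $(\ref{GG})$ pairs $x$ against $b$ via the adjoint $R_z^*$, and $R_z^* $ is precisely the right multiplication $x\mapsto x\star z$ of $Q^t$ by relation $(\ref{trans})$. First I would compute the Walsh transform of $f(x,y)=tr(G(\Gamma_x(y))x)$ directly. Writing $b=(b_1,b_2)$ and splitting the sum over the spread elements $\{(0,y)\}$ and $\{(x,R_z(x)):x\in F\}$, one gets
\begin{eqnarray*}
W_f(b_1,b_2)
&=& \sum_{y\in F}(-1)^{tr(b_2 y)}
 + \sum_{x\in F^*}\sum_{z\in F}(-1)^{tr(G(z)x)+tr(b_1 x)+tr(b_2 R_z(x))}\\
&=& \sum_{x\in F}\sum_{z\in F}(-1)^{tr\left((G(z)+b_1+R_z^*(b_2))x\right)} \;-\; q,
\end{eqnarray*}
where I have used $tr(b_2 R_z(x))=B(b_2,R_z(x))=B(R_z^*(b_2),x)=tr(R_z^*(b_2)x)$ and added back the missing $x=0$ term against the $\{(0,y)\}$-sum (which contributes $q$ when $b_2=0$ and $0$ otherwise; the bookkeeping here is routine but must be done carefully). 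Summing over $x$ first, the inner sum is $q$ exactly when $G(z)+b_1+R_z^*(b_2)=0$, so $W_f(b_1,b_2)=q(|M_{b}|-1)$ with $M_b=\{z\in F: G(z)+b_1+b_2\star z=0\}$ (using $R_z^*(b_2)=b_2\star z$).

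Next, the equivalence $(1)\Leftrightarrow(2)$ follows just as in Theorem~\ref{thm:main}: $f$ is bent iff $|M_b|\in\{0,2\}$ for every $b$, and specializing $b_2=0$ forces $G$ to be a bijection (since then $M_b=\{z:G(z)=b_1\}$ must have size $0$ or $2$ for all $b_1$; combined with the case $b_2\ne0$ one actually gets that $G$ is a permutation by the Carlet--Mesnager Theorem~\ref{car}, or one argues directly from a counting/parity argument), and for $b_2=b\ne 0$ the condition $|M_b|\in\{0,2\}$ for all $b_1$ says exactly that $z\mapsto G(z)+b\star z$ is $2$-to-$1$. For $(2)\Leftrightarrow(3)$: the set $\mathcal{O}=\{x=0\}\cup\{y=G(z)+x\star z:z\in F\}$ is a family of $q+1$ lines of $\mathcal{A}(Q^t)$ (the lines $y=x\star z + G(z)$ have ``slope'' $z$ in $Q^t$, plus the vertical line $x=0$); three of them are concurrent iff either two slopes $z,z'$ give lines meeting the vertical line in the same point, i.e.\ $G(z)=G(z')$, or three finite lines $z,z',z''$ share a point $(b_2,b_1)$ with $b_1 = b_2\star z+G(z)$ for all three, i.e.\ $z\mapsto G(z)+b_2\star z$ takes the value $b_1$ three times. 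So ``no three concurrent'' is precisely ``$G$ injective and $G(z)+b\star z$ is $\le 2$-to-$1$ for $b\ne0$'', which by counting is the same as condition (2); this also certifies that $\mathcal{O}$, extended by its points at infinity, is a line oval of $PG(2,q)$ with nucleus the line at infinity.

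For the dual, $\tilde f(x,y)=0$ iff $W_f(x,y)=q$ iff $|M_{(x,y)}|=2$ iff there is some $z$ with $G(z)+x+y\star z=0$, i.e.\ the point $(y,x)$ lies on the finite line $\{Y = X\star z + G(z)\}\in\mathcal{O}$; and the case $z$ absent but $(y,x)$ on the vertical line $X=0$, i.e.\ $y=0$, must be treated separately, corresponding to the factor $y^{q-1}$. Hence $(y,x)\in E(\mathcal{O})$ iff $y=0$ or $y\star z + x + G(z)=0$ for some $z$, which gives
$$\tilde f(x,y) = y^{q-1}\prod_{z\in Q}(y\star z + x + G(z))^{q-1},$$
using that each factor lies in $F$, so its $(q-1)$st power is the indicator of being nonzero. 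I expect the main obstacle to be the careful bookkeeping around the distinguished spread element $\{(0,y):y\in F\}$ and its affine counterpart, the vertical line $x=0$: one must check that the ``degenerate'' incidences (lines through a point at infinity, the $y=0$ slice of $E(\mathcal{O})$, the case $b_2=0$ in the Walsh sum) all line up consistently, and in particular that the coordinate swap $(x,y)\leftrightarrow(y,x)$ between $f$ on $\mathcal{A}(Q)$ and $\mathcal{O}$ on $\mathcal{A}(Q^t)$ is applied uniformly — everything else is a transcription of the proof of Theorem~\ref{thm:main} with $T(ux)$ replaced by $B(R_z^*(b),x)=B(b,R_z(x))$.
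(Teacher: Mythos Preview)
Your approach is essentially the same as the paper's: compute the Walsh transform by splitting over the spread, use $tr(b_2 R_z(x))=tr(R_z^*(b_2)x)=tr((b_2\star z)x)$, reduce bentness to the fiber sizes of $z\mapsto G(z)+b_2\star z$, and read off both the line-oval condition in $\mathcal{A}(Q^t)$ and the dual via $E(\mathcal{O})$.

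The one place where you and the paper diverge is the bookkeeping you flagged yourself. Your displayed equality
\[
\sum_{y}(-1)^{tr(b_2 y)}+\sum_{x\ne 0}\sum_z(\cdots)=\sum_{x}\sum_z(\cdots)-q
\]
is only valid when $b_2\ne 0$; in general the first sum is $q\,\delta_0(b_2)$ while the ``missing $x=0$'' term is exactly $q$, so the correct identity is
\[
W_f(b_1,b_2)=q\bigl(\delta_0(b_2)-1+|M_b|\bigr),
\]
which is what the paper writes. With this in hand, specializing $b_2=0$ gives $W_f(b_1,0)=q\,|\{z:G(z)=b_1\}|$, and bentness forces this to equal $\pm q$, hence $|\{z:G(z)=b_1\}|=1$ for every $b_1$: $G$ is a bijection immediately, with no need to invoke Theorem~\ref{car} or a separate counting/parity argument. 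The same $\delta_0$ term is what makes the $y=0$ case of the dual come out cleanly (there $|M_b|=1$, not $2$, yet $W_f=q$), matching the factor $y^{q-1}$ you isolated. Once this is fixed, your proof and the paper's coincide line by line.
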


\begin{proof}
1) $\Leftrightarrow$ 2).
We put $R_z(x) = x\circ z$. The Walsh transform of the function $f(x,y)$ is
\begin{eqnarray*}
W_f(a,b)
&  = & \sum_{x, y \in F} (-1)^{f(x,y) + tr( ax+by)}     \\
&  = & \sum_{x, y \in F} (-1)^{tr(G(\Gamma_x(y))x + ax+by)}     \\
&  = & q\delta_0(b) + \sum_{x \in F^* , \ z \in F} (-1)^{tr(G(z)x + ax+bR_z(x))}     \\
&  = & q(\delta_0(b) -1) + \sum_{z \in F , \ x \in F} (-1)^{tr((G(z) + a+R_z^*(b))x)}     \\
&  = & q(\delta_0(b) -1 + | \{ z \in F , \ G(z) + a + R_z^*(b) = 0\} | ) \\
&  = & q(\delta_0(b) -1 + | \{ z \in F , \ G(z) + b\star z = a \} | ).
\end{eqnarray*}
Therefore,  a Boolean function $f(x,y) = tr (G(z)x)$  is  bent if and only if
the function $G(z) + b\star z$ is  2-to-1 for all $b \in F^*$, and $G(z)$ is a bijection.


2) $\Rightarrow$ 3).  Let the function $G(z) + b\star z$ be  2-to-1 for all $b \in F^*$, and $G(z)$  a bijection.
Consider a union of lines
$\mathcal{O} = \{ x=0\}  \cup \{ y=G(z) +x\star z \mid z \in F\}$.
Then the number of lines of $\mathcal{O}$ passing through point $(x,y)=(b,a)$
with   $b\ne 0$ is equal to the number of solutions of the equation $G(z) + b\star z=a$, which is 2 or 0.
The number of lines of $\mathcal{O}$ passing through point $(x,y)=(0,a)$ is 2.
Therefore, $\mathcal{O}$ is a line oval.

3) $\Rightarrow$ 2). Let $\mathcal{O} = \{ x=0\}  \cup \{ y=G(z) +x\star z \mid z \in F\}$ be a line oval
and let $E(\mathcal{O})$ be the set of points which are on the lines of $\mathcal{O}$.  Let $b\ne 0$.
If $(b,a) \in E(\mathcal{O})$  then there are two lines of $\mathcal{O}$ passing through $(b,a)$,
hence the equation $G(z) + b\star z=a$ has two solutions.
If $(b,a) \not\in E(\mathcal{O})$  then there are no lines of $\mathcal{O}$ passing through $(b,a)$,
hence the equation $G(z) + b\star z=a$ has no solution.
Finally, let $b=0$.
Then $(0,a) \in E(\mathcal{O})$  and there are two lines of $\mathcal{O}$ passing through $(0,a)$,
one of them is the line $x=0$, the second should be $y=G(z) +x\star z$ for some $z$,
hence the equation $a=G(z)$ has one solution.

Let conditions 1) - 3) be satisfied. Then the Walsh transform of the function $f(x,y)$ is
\begin{eqnarray*}
W_f(a,b)
& = & q(\delta_0(b) -1 + | \{ z \in F , \ G(z) + b\star z = a \} | ) \\
& =  & \left\{ \begin{array} {l}
\ \  q, \ {\rm if} \ (b,a)\in E(\mathcal{O}), \\ -q, \ {\rm if} \ (b,a)\not\in E(\mathcal{O})
\end{array} \right. \\
& = & q (-1)^{1 + \bar{\chi}_{E(\mathcal{O})}}.
\end{eqnarray*}

We have $\tilde{f}(x,y) = 0$ if and only if $(y,x) \in E(\mathcal{O})$, that is, if and only if
$x=G(z)+y\star z$ for some $z\in Q$ or $y=0$.
Hence
$$\tilde{f}(x,y) = y^{q-1}\prod_{z \in Q} (y \star z + x + G(z))^{q-1},$$
as required.
\end{proof}

Part 1) - 2) of the previous theorem was also proved in \cite{Ces2015}, and in case of Desarguesian spreads 
it is proved in \cite{Carlet2011}.  Carlet and Mesnager \cite[lemma 13]{Carlet2011} also showed that, in fact,  
the function $G(z)$ is an o-polynomial and revealed general connection between Niho bent functions 
(in bivariant presentation) and hyperovals. 
Every o-polynomial determines an equivalence class of hyperovals. 
However, there are several inequivalent bent functions for each o-polynomial. 
Recall that, if points of $PG(2,q)$ have coordinates $(x_0,x_1,x_2)$  and $G(z)$ is an o-polynomial, 
then corresponding hyperoval may be written, for example, as 
$$\mathcal{O} = \{ (1,z,G(z)) : z \in F \} \cup \{ (0,0,1), (0,1,0) \}.$$
We make result of Carlet and Mesnager more precise and show that bent functions linear on elements
of a Desarguesian spread  are in one-to-one correspondence with line ovals in an affine plane.
One of advantages of our approach is that it gives us a straightforward formula to calculate dual bent function. 
In addition, it becomes easier to study equivalence questions.

\begin{remark}
If  $Q$ is a semifield then the condition 2) in Theorem \ref{main} means that  $G(z)$ is an oval polynomial
for $Q^{td}$.
\end{remark}



We recall standard collineations of the affine plane ${\mathcal A} (Q)$:
$$\tau_{u,v} : (x,y) \mapsto (x+u,y+v),$$
$$\rho_c : (x,y) \mapsto (x,y+x\circ c),$$
$$\hat{\varphi} : (x,y) \mapsto (\varphi(x),\varphi(y)),$$
where $\varphi \in Aut(Q)$. These collineations generate the full group of collineations (automorphisms) of
${\mathcal A} (Q)$ for many types of semifields \cite{Hir}.
The spread  $\Sigma(Q)$ is invariant under the actions of collineations $\rho_c$ and $\hat{\varphi}$.


Collineations $\rho_c$ and $\hat{\varphi}$ induce functions
$$(\rho_c f)(x,y) = f(\rho_c^{-1}(x,y)) = f(x,y-x\circ c),$$
$$(\hat{\varphi} f)(x,y) = f(\varphi^{-1}(x),\varphi^{-1}(y)).$$

Next we show that adding a linear function $tr(ux+vy)$ to $f(x,y)$ produces a shift of the corresponding
line oval by the vector $(v,u)$. We remind that if $f(x,y)$ is bent then $f(x,y)+tr(ux+vy)$ is bent as well.

\begin{proposition}
\label{shift2}
Let $Q=(F, +, \circ)$ be a  prequasifield, $\Sigma(Q)$ be its associated spread, and
$Q^t=(F, +, \star)$ be the transpose prequasifield of $Q$.
Let a bent function $f(x,y)$ be defined by Equation $(\ref{GG})$, and
$\mathcal{O} = \{ x=0\}  \cup \{ y= x\star z +G(z) \mid z \in F\}$ be its corresponding line oval.
Define a function $f_{u,v}(x,y) = f(x,y)+tr(ux+vy)$ and a line oval
$\mathcal{O}_{v,u} = \tau_{v,u} \mathcal{O}$, where $u,v \in F$.
Then
$$\widetilde{f_{u,v}}(x,y)= 1+ \overline{\chi}_{E(\mathcal{O}_{v,u})}(x,y)=
1+\chi_{E(\mathcal{O}_{v,u})}(y,x).$$
\end{proposition}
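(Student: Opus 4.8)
The plan is to mimic the argument of Proposition \ref{shift} in the bivariate setting, computing the Walsh transform of $f_{u,v}$ directly in terms of that of $f$ and then translating the membership condition through the shift $\tau_{v,u}$. First I would write, for arbitrary $(a,b) \in F\times F$,
\begin{eqnarray*}
W_{f_{u,v}}(a,b)
&=& \sum_{x,y\in F} (-1)^{f(x,y)+tr(ux+vy)+tr(ax+by)}\\
&=& \sum_{x,y\in F} (-1)^{f(x,y)+tr((a+u)x+(b+v)y)}
= W_f(a+u,\,b+v).
\end{eqnarray*}
By Theorem \ref{main}, $W_f(a+u,b+v)=q$ if $(b+v,\,a+u)\in E(\mathcal{O})$ and $-q$ otherwise. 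The point $(b+v,a+u)$ lies in $E(\mathcal{O})$ exactly when $(b,a) = \tau_{v,u}^{-1}$ applied appropriately — more precisely, since $\tau_{v,u}$ acts on $\mathcal{A}(Q^t)$ by $(X,Y)\mapsto (X+v,Y+u)$, we have $(b+v,a+u)\in E(\mathcal{O})$ iff $(b,a)\in \tau_{v,u}(E(\mathcal{O})) = E(\tau_{v,u}\mathcal{O}) = E(\mathcal{O}_{v,u})$, using that $E(\cdot)$ commutes with collineations (a point lies on a line of $\tau_{v,u}\mathcal{O}$ iff its $\tau_{v,u}^{-1}$-image lies on a line of $\mathcal{O}$).

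Putting this together gives
$$W_{f_{u,v}}(a,b) = \left\{\begin{array}{l} \ \ q, \ \text{if } (b,a)\in E(\mathcal{O}_{v,u}),\\ -q, \ \text{if } (b,a)\not\in E(\mathcal{O}_{v,u}),\end{array}\right.$$
so that $\widetilde{f_{u,v}}(x,y)=0$ precisely when $(y,x)\in E(\mathcal{O}_{v,u})$, which is the asserted formula $\widetilde{f_{u,v}}(x,y)=1+\overline{\chi}_{E(\mathcal{O}_{v,u})}(x,y)=1+\chi_{E(\mathcal{O}_{v,u})}(y,x)$. One should also note in passing that $\mathcal{O}_{v,u}$ genuinely is a line oval for $\mathcal{A}(Q^t)$ — this is immediate since $\tau_{v,u}$ is a collineation of $\mathcal{A}(Q^t)$ and collineations map line ovals to line ovals — and that $f_{u,v}$ is still linear on the elements of the spread $\Sigma(Q)$, since $tr(ux+vy)$ is linear; hence Theorem \ref{main} indeed applies to it.

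The only genuinely delicate point is the bookkeeping of the swapped coordinates: Theorem \ref{main} ties $\widetilde f(x,y)$ to $E(\mathcal{O})$ via the transposition $(x,y)\mapsto (y,x)$, while the linear perturbation $tr(ux+vy)$ naturally shifts the Walsh argument $(a,b)$ by $(u,v)$; reconciling these forces the translation on the oval side to be by $(v,u)$ rather than $(u,v)$, which is exactly why the statement is phrased with $\mathcal{O}_{v,u}=\tau_{v,u}\mathcal{O}$. I expect this index-matching to be the main (and essentially only) obstacle; everything else is the same one-line Walsh-shift computation as in Proposition \ref{shift}. I would therefore keep the proof short, present the displayed chain above, and close with the remark that $E$ commutes with $\tau_{v,u}$.
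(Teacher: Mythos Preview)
Your proposal is correct and follows essentially the same route as the paper: compute $W_{f_{u,v}}(a,b)=W_f(a+u,b+v)$, invoke Theorem \ref{main} to rewrite this as $\pm q$ according to whether $(b+v,a+u)\in E(\mathcal{O})$, and then rephrase that membership as $(b,a)\in E(\mathcal{O}_{v,u})$. The paper's proof is terser (it omits your side remarks about $\mathcal{O}_{v,u}$ being a line oval and about $E$ commuting with $\tau_{v,u}$), but the argument is the same.
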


\begin{proof}
We have
\begin{eqnarray*}
W_{f_{u,v}}(a,b)
&  = & \sum_{x, y \in F} (-1)^{f(x,y)+tr(ux+vy) + tr( ax+by)}     \\
&  = & W_f (a+u,b+v)   \\
& =  & \left\{ \begin{array} {l}
\ \  q, \ {\rm if} \ (b+v,a+u)\in E(\mathcal{O}), \\ -q, \ {\rm if} \ (b+v,a+u)\not\in E(\mathcal{O})
\end{array} \right. \\
& =  & \left\{ \begin{array} {l}
\ \  q, \ {\rm if} \ (b,a)\in E(\mathcal{O}_{v,u}), \\ -q, \ {\rm if} \ (b,a)\not\in E(\mathcal{O}_{v,u}).
\end{array} \right. \\
\end{eqnarray*}
Therefore,
$$\widetilde{f_{u,v}}(x,y)= 1+ \overline{\chi}_{E(\mathcal{O}_{v,u})}(x,y). $$
\end{proof}

The main result of this section is the following.

\begin{theorem}
\label{main2}
Let $Q=(F, +, \circ)$ be a  prequasifield, $\Sigma(Q)$ be its associated spread, and
$Q^t=(F, +, \star)$ be the transpose prequasifield of $Q$.
Then bent functions $f(x,y)$ which are linear on elements of the spread $\Sigma(Q)$,  are in
one-to-one correspondence with line ovals $\mathcal{O}$ in $\mathcal{A}(Q^t)$.
Dual function $\tilde{f}$ can be obtained from the characteristic function of $E(\mathcal{O})$ by
swapping coordinates $(x,y)$ and adding constant function $1$,
where $E(\mathcal{O})$ is the set of points which are on the lines of the line oval $\mathcal{O}$.
\end{theorem}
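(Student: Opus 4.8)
The plan is to assemble Theorem \ref{main2} from the pieces already established in the section, mainly Theorem \ref{main} together with the dictionary between prequasifields and their transposes. First I would recall that any bent function $f(x,y)$ which is linear on the elements of the spread $\Sigma(Q)$ can be put in the normalized form \eqref{GG}, i.e.\ $f(x,y)=tr(G(\Gamma_x(y))x)$ with $\mu=0$, \emph{after} possibly adding the linear function $tr(\mu y)$; since adding a linear function is a bijection on the set of bent functions that are linear on $\Sigma(Q)$ (and, by Proposition \ref{shift2}, corresponds to translating the line oval), it suffices to set up the correspondence for the normalized functions and then observe that it extends equivariantly. For a normalized $f$, Theorem \ref{main} (equivalence 1 $\Leftrightarrow$ 3) tells us that $f$ is bent if and only if $\mathcal{O}(f) := \{x=0\}\cup\{y=G(z)+x\star z \mid z\in F\}$ is a line oval in $\mathcal{A}(Q^t)$, and it also gives the formula for $\tilde f$ in terms of $E(\mathcal{O})$. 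So the map $f\mapsto \mathcal{O}(f)$ is well defined from bent functions linear on $\Sigma(Q)$ to line ovals in $\mathcal{A}(Q^t)$.

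Next I would prove surjectivity and injectivity of $f\mapsto\mathcal{O}(f)$. For surjectivity, let $\mathcal{O}$ be an arbitrary line oval in $\mathcal{A}(Q^t)$, whose nucleus (as a line oval in the projective closure) is the line at infinity by our convention for affine line ovals. As in the last paragraph of the proof of Theorem \ref{thm:main}, a line oval contains no two parallel lines, so among the $q+1$ lines of $\mathcal{O}$ at most one is vertical ($x=0$ up to translation) and the remaining lines are graphs $y=x\star z + c_z$; in fact, since the pencil of vertical lines must meet $\mathcal{O}$ in a point of the line oval, exactly one vertical line occurs, and after a translation $\tau_{v,u}$ (which by Proposition \ref{shift2} only shifts things in a controlled way and corresponds to adding a linear function to $f$) we may assume that vertical line is $x=0$. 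The remaining $q$ lines then hit each slope $z\in F$ exactly once — otherwise two of them would be parallel — so they are precisely $\{y=x\star z+G(z)\mid z\in F\}$ for a uniquely determined function $G:F\to F$. Thus $\mathcal{O}=\mathcal{O}(f)$ for the $f$ defined by \eqref{GG} with this $G$, and by Theorem \ref{main} that $f$ is bent. For injectivity, if $\mathcal{O}(f_1)=\mathcal{O}(f_2)$ for normalized bent functions then the line $x=0$ is common and the non-vertical lines coincide, which forces $G_1(z)=G_2(z)$ for all $z$ (matching the line of slope $z$), hence $f_1=f_2$ by \eqref{GG}; the general (non-normalized) case follows because the normalization constant $\mu$ is recovered from $f$ via $f(0,y)=tr(\mu y)$ and corresponds under Proposition \ref{shift2} to a fixed translation of the oval, so distinct $f$ give distinct ovals.

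Finally, the statement about the dual is just a restatement of the corresponding clause in Theorem \ref{main}: for the line oval $\mathcal{O}$ associated with $f$ one has $\tilde f(x,y) = 1 + \bar\chi_{E(\mathcal{O})}(x,y) = 1 + \chi_{E(\mathcal{O})}(y,x)$, i.e.\ $\tilde f$ is obtained from the characteristic function of $E(\mathcal{O})$ by swapping the two coordinates and adding the all-ones function. I would also point out, for concreteness, that the explicit product formula $\tilde f(x,y)=y^{q-1}\prod_{z\in Q}(y\star z + x + G(z))^{q-1}$ from Theorem \ref{main} is available in the normalized case. The main obstacle, and the only point requiring genuine care, is the surjectivity step: one must rule out that a line oval in $\mathcal{A}(Q^t)$ has no vertical line or more than one, and one must argue cleanly that after the allowed translation the vertical line is exactly $x=0$ and that every slope occurs exactly once among the remaining lines — all of which rest on the combinatorial fact (already quoted from \cite{Hir} in Section \ref{preliminary}) that a line oval contains no pair of parallel lines and that each point lies on $0$ or $2$ of its lines. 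Everything else is bookkeeping with \eqref{trans}, \eqref{GG}, Theorem \ref{main} and Proposition \ref{shift2}.
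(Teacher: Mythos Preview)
Your proposal is correct and follows essentially the same route as the paper: normalize $f$ by adding $tr(\mu y)$ so that $f(0,y)=0$, invoke Theorem~\ref{main} to pass between normalized bent functions and line ovals of the special shape $\{x=0\}\cup\{y=G(z)+x\star z\}$, and use Proposition~\ref{shift2} together with the no-parallel-lines property to handle the translations needed on both sides. If anything, you are slightly more explicit than the paper about injectivity and about why the correspondence remains a bijection after un-normalizing; the paper leaves those points implicit.
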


\begin{proof}
Let $f(x,y)$ be a bent function linear on the elements of the spread $\Sigma(Q)$.
Then $f(0,y)=tr(vy)$ for some $v\in F$.  Adding the linear function $tr(vy)$, we can assume that $f(0,y) =0$.
Then by Theorem \ref{main} it corresponds to a line oval $\mathcal{O}$ in $\mathcal{A}(Q^t)$. 
By Proposition \ref{shift2}, shifting line oval $\mathcal{O}$ by the vector $(v,0)$
gives a line oval for the original function $f(x,y)$.

Conversely, let  $\mathcal{O}$ be a line oval in $\mathcal{A}(Q^t)$. 
These lines can be described by equations $x=c$ and $y=x\circ z +G(z)$. There are no parallel lines in
$\mathcal{O}$, since two parallel lines and line at infinity are concurrent in a projective plane. 
Therefore, line oval $\mathcal{O}$
consists of $q$ different lines of the form $y=x\circ z +G(z)$ and one line $x=c$,
so $G(z)$ is a function from $F$ to $F$.
Shifting line oval by the vector $(c,0)$, we can assume that $c=0$.
Then this oval corresponds to a Boolean function
$f(x,y)$ defined by Equation $(\ref{GG})$.
\end{proof}

Therefore, bent functions $f(x,y)$, which are linear on the elements of a spread, are {\em canonically}
associated with line ovals in the translation plane of the dual (orthogonal) spread. 
Dual bent functions can be described by points on such line ovals.
These line ovals live in the affine plane $\mathcal{A} (Q^t)$ and the (dual) nucleus is 
exactly the line at infinity.
In the previous papers o-polynomials were considered, and people might try to associate them with ovals.
But we  see that  line ovals are more appropriate objects to study than ovals. 
And in addition, in case of semifields, one can get ovals in the projective plane associated with $Q^{td}$.

Now we study how collineations of  ${\mathcal A} (Q)$ may reflect line ovals and bent functions.
For the collineation $\rho_c : (x,y) \mapsto (x,y+x\circ c)$ of ${\mathcal A} (Q)$ we define
the corresponding collineation $\rho_c^t : (x,y) \mapsto (x,y+x\star c)$ of ${\mathcal A} (Q^t)$.

\begin{proposition}
\label{shift-2}
Let $Q=(F, +, \circ)$ be a  prequasifield, $\Sigma(Q)$ be its associated spread, and
$Q^t=(F, +, \star)$ be the transpose prequasifield of $Q$.
Let a bent function $f(x,y)$ be defined by Equation $(\ref{GG})$, and
$\mathcal{O} = \{ x=0\}  \cup \{ y= x\star z +G(z) \mid z \in F\}$ be its corresponding line oval.
Define a function $f_c = \rho_c f$ and a line oval
$\mathcal{O}_{c} = \rho_c^t(\mathcal{O})= \{ x=0\}  \cup \{ y= x\star z +G(z+c)\mid z \in F\}$.
Then $f_c$ is bent and
$$\widetilde{f_c}(x,y)= 1+ \overline{\chi}_{E(\mathcal{O}_{c})}(x,y)=1+\chi_{E(\mathcal{O}_{c})}(y,x).$$
\end{proposition}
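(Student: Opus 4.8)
The plan is to reduce the claim to a direct Walsh transform computation, essentially parallel to the proof of Proposition \ref{shift2}, but replacing the shift $\tau_{v,u}$ by the shear $\rho_c^t$ on $\mathcal{A}(Q^t)$. First I would record explicitly what the collineation $\rho_c$ does to $f$: by definition $(\rho_c f)(x,y)=f(x,y+x\circ c)=f(x,y-x\circ c)$, and I would re-express this in the form of Equation $(\ref{GG})$. Since $f(x,y)=tr(G(\Gamma_x(y))x)$, on the line $y=R_z(x)=x\circ z$ we have $f(x,x\circ z)=tr(G(z)x)$; hence $(\rho_c f)(x,x\circ z')$ for $y=x\circ z'=x\circ(z+c)$ equals $f(x,x\circ(z+c)+x\circ c)$... more cleanly, I would argue that $\rho_c f$ is again linear on the elements of $\Sigma(Q)$ (the spread is $\rho_c$-invariant, as noted in the excerpt) and that its associated "$G$-function" is $z\mapsto G(z+c)$. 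Concretely: $(\rho_c f)(x,\,x\circ z)=f(x,\,x\circ z + x\circ c)=f(x,\,x\circ(z+c))$ would need the right-distributive identity $x\circ z + x\circ c$ — but $Q$ is only a \emph{right} prequasifield, so $x\circ z+x\circ c$ need not equal $x\circ(z+c)$. The safe route is instead to work on the transpose side directly, where $\star$-multiplication enters through $R_z^*$.

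So the cleaner line of attack: by Theorem \ref{main}, $f$ bent corresponds to the line oval $\mathcal{O}=\{x=0\}\cup\{y=x\star z+G(z)\mid z\in F\}$ in $\mathcal{A}(Q^t)$, and the key identity from the proof of Theorem \ref{main} is
\[
W_f(a,b)=q\bigl(\delta_0(b)-1+|\{z\in F:\ G(z)+b\star z=a\}|\bigr).
\]
I would compute $W_{f_c}(a,b)=\sum_{x,y}(-1)^{f(x,y-x\circ c)+tr(ax+by)}$. Substituting $y\mapsto y+x\circ c$ (a bijection for fixed $x$) gives $W_{f_c}(a,b)=\sum_{x,y}(-1)^{f(x,y)+tr(ax+b(y+x\circ c))}=\sum_{x,y}(-1)^{f(x,y)+tr((a+? )x+by)}$, where the cross term $tr(b\cdot x\circ c)$ is rewritten using the transpose relation $(\ref{trans})$: $B(b, x\circ c)=B(x, ?)$ is not quite it; rather $tr(b(x\circ c))=B(b,x\circ c)$, and by $(\ref{trans})$, $B(x\star c, b)=B(x, b\circ c)$, which isn't directly $B(b,x\circ c)$ either. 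The correct adjoint statement I need is $B(R_c(x),b)=B(x,R_c^*(b))$, i.e. $tr((x\circ c)b)=tr(x\,(c\star' b))$ for the appropriate transpose product; I would pin down the exact form (it is $b\star z$ that appears in the Walsh sum of Theorem \ref{main}, via $R_z^*(b)=b\star z$), concluding $tr(b(x\circ c))=tr((c\star b)x)$ — wait, that has the wrong shape. The honest statement: $R_c^*(b)$ paired with $x$ equals $R_c(x)$ paired with $b$, and $R_c^*(b)$ is by definition $b\star c$; so $tr((x\circ c)b)=tr(x(b\star c))$. Hence $W_{f_c}(a,b)=\sum_{x,y}(-1)^{f(x,y)+tr((a+b\star c)x+by)}=W_f(a+b\star c,\ b)$.

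From $W_{f_c}(a,b)=W_f(a+b\star c,b)$ and Theorem \ref{main} I get $W_{f_c}(a,b)=q$ iff $(b,\,a+b\star c)\in E(\mathcal{O})$. Then I would check that the map $(b,a)\mapsto(b,a+b\star c)$ is exactly the inverse collineation $(\rho_c^t)^{-1}$ on $\mathcal{A}(Q^t)$, so $(b,a+b\star c)\in E(\mathcal{O})$ iff $(b,a)\in E(\rho_c^t\mathcal{O})=E(\mathcal{O}_c)$; and I would verify that $\rho_c^t\mathcal{O}=\{x=0\}\cup\{y=x\star z+G(z+c)\mid z\in F\}$ by a change of variable $z\mapsto z+c$ combined with the right-distributivity of $\star$ in $Q^t$ (which does hold, since $Q^t$ is again a right prequasifield), giving $x\star(z+c)+G(z+c)=x\star z+x\star c+G(z+c)$; applying $\rho_c^t:(x,y)\mapsto(x,y+x\star c)$ to the line $y=x\star z+G(z+c)$ sends it to $y=x\star z+x\star c+G(z+c)=x\star(z+c)+G(z+c)$, which ranges over all lines $y=x\star w+G(w)$ as $w$ ranges over $F$ — hence $\rho_c^t$ permutes $\mathcal{O}$ and $\mathcal{O}_c$ is a valid alternative parametrization. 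Since $E(\mathcal{O}_c)$ has the same size $q(q+1)/2$ and $W_{f_c}(a,b)=\pm q$ always, $f_c$ is bent; and $\widetilde{f_c}(x,y)=0$ iff $W_{f_c}(x,y)=q$ iff $(y,x)\in E(\mathcal{O}_c)$, giving $\widetilde{f_c}=1+\overline{\chi}_{E(\mathcal{O}_c)}=1+\chi_{E(\mathcal{O}_c)}(y,x)$ as claimed.

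The main obstacle is bookkeeping the adjoint/transpose relation correctly: one must be careful which of $\circ$, $\star$, and the adjoint $R_c^*$ appears, and on which side the cross term $tr(b(x\circ c))$ converts — getting $W_{f_c}(a,b)=W_f(a+b\star c,b)$ with the shift in the first coordinate (not the second) is the crux, and it hinges on $R_z^*(b)=b\star z$, exactly the quantity already used in the Walsh computation of Theorem \ref{main}. Everything else is a routine translate of that theorem's argument.
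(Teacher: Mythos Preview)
Your approach is a close variant of the paper's: the paper recomputes the Walsh transform of $f_c$ from scratch by parametrizing $y=x\circ z$ over the spread and reading off $|\{z:G(z+c)+b\star z=a\}|$, whereas you substitute $y\mapsto y+x\circ c$, use the adjoint relation $tr(b(x\circ c))=tr((b\star c)x)$ to obtain $W_{f_c}(a,b)=W_f(a+b\star c,b)$, and then quote Theorem~\ref{main} together with the action of $\rho_c^t$. Both are the same Walsh computation sliced differently; your reduction step is in fact a little cleaner, since it only uses linearity of $R_c$ (right distributivity of $\circ$) to reach $W_{f_c}(a,b)=W_f(a+b\star c,b)$.

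One correction is needed, however. The identity $x\star(z+c)=x\star z+x\star c$ that you invoke to match $\rho_c^t(\mathcal{O})$ with $\{x=0\}\cup\{y=x\star z+G(z+c)\}$ is \emph{left} distributivity of $\star$, not right. Right distributivity $(x+y)\star z=x\star z+y\star z$ is automatic from linearity of $R_z^*$; but left distributivity of $\star$ is $R_{z+c}^*=R_z^*+R_c^*$, equivalently $R_{z+c}=R_z+R_c$, equivalently $x\circ(z+c)=x\circ z+x\circ c$, i.e.\ left distributivity of $\circ$ --- which is \emph{not} an axiom of a right prequasifield. The paper's own proof silently uses the same hypothesis at the step $f(x,\,x\circ z+x\circ c)=tr(G(z+c)x)$, and the statement of the proposition already presupposes it (both the asserted equality $\rho_c^t(\mathcal{O})=\{x=0\}\cup\{y=x\star z+G(z+c)\}$ and the fact that $\rho_c$, $\rho_c^t$ are collineations require it). So your argument goes through under that implicit assumption (e.g.\ $Q$ a presemifield); just name the correct distributive law and flag that it is being used.
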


\begin{proof}
We have
\begin{eqnarray*}
W_{f_c}(a,b)
&  = & \sum_{x, y \in F} (-1)^{(\rho_c f)(x,y) + tr( ax+by)}     \\
&  = & \sum_{x, y \in F} (-1)^{f(x,y+x\circ c) + tr( ax+by)}     \\
&  = & q\delta_0(b) + \sum_{x \in F^* , \ z \in F} (-1)^{tr(G(z+c)x + ax+bR_z(x))}     \\
&  = & q(\delta_0(b) -1) + \sum_{z \in F , \ x \in F} (-1)^{tr((G(z+c) + a+R_z^*(b))x)}     \\
&  = & q(\delta_0(b) -1 + | \{ z \in F , \ G(z+c) + a + R_z^*(b) = 0\} | ) \\
&  = & q(\delta_0(b) -1 + | \{ z \in F , \ G(z+c) + b\star z = a \} | )\\
& =  & \left\{ \begin{array} {l}
\ \  q, \ {\rm if} \ (b,a)\in E(\mathcal{O}_{c}), \\ -q, \ {\rm if} \ (b,a)\not\in E(\mathcal{O}_{c}).
\end{array} \right. \\
\end{eqnarray*}
Therefore,
$$\widetilde{f_c}(x,y)= 1+ \overline{\chi}_{E(\mathcal{O}_{c})}(x,y), $$
as required.
\end{proof}

\begin{corollary}
Up to EA-equivalence, in bent function defined by Equation $(\ref{GG})$ one can assume that $G(0)=0$.
\end{corollary}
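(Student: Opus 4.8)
The plan is to exploit Proposition~\ref{shift-2}, which tells us precisely how adding a collineation $\rho_c$ to a bent function $f(x,y)$ defined by Equation~(\ref{GG}) transforms the data: the function $G(z)$ is replaced by $G(z+c)$, and the resulting function is still bent (and still linear on the elements of $\Sigma(Q)$, since $\rho_c$ leaves $\Sigma(Q)$ invariant). So, starting from an arbitrary bent function $f(x,y)$ of the form~(\ref{GG}) with associated function $G$, I would set $c = G^{-1}(G(0))$—wait, more simply, I would pick $c$ so that $G(c) = G(0)$ forces nothing; instead note $G$ is a bijection by Theorem~\ref{main}, so there is a unique $z_0 \in F$ with $G(z_0) = 0$, and I take $c = z_0$. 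Then the new function $f_c = \rho_c f$ has associated polynomial $G_c(z) = G(z+z_0)$, which satisfies $G_c(0) = G(z_0) = 0$.

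The key steps, in order, are: (1) observe that a general bent function linear on $\Sigma(Q)$ can, after adding a linear function $tr(vy)$ (an EA-equivalence), be assumed to satisfy $f(0,y)=0$ and hence be of the form~(\ref{GG}) with an associated bijection $G$ and a 2-to-1 condition, by Theorem~\ref{main} and the reduction used in the proof of Theorem~\ref{main2}; (2) since $G$ is a permutation of $F$, choose $c \in F$ with $G(c) = 0$; (3) apply Proposition~\ref{shift-2} to conclude that $f_c = \rho_c f$ is again bent with associated function $G_c(z) = G(z+c)$ and $G_c(0) = 0$; (4) observe that $\rho_c$ is an affine permutation of $F \times F$ (indeed a linear map, being $(x,y)\mapsto (x, y+x\circ c)$), so $f_c$ is EA-equivalent to $f$. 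Composing the two EA-equivalences (adding $tr(vy)$, then composing with $\rho_c^{-1}$) gives the claim.

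I expect the only real subtlety to be bookkeeping about which direction the substitution goes and making sure the composite map remains an EA-equivalence; both are routine since $\rho_c$ is linear and invertible (with inverse $\rho_c$ itself up to sign, which is trivial in characteristic $2$) and adding a trace-linear function is by definition an EA-equivalence. There is no hard analytic step here—the corollary is essentially an immediate consequence of Proposition~\ref{shift-2} combined with the normalization already carried out in Theorem~\ref{main2}. One should simply remark that the hypothesis $G(0)=0$ corresponds geometrically to requiring the line $y = x\star 0 + G(0) = 0$, i.e. the line $y=0$, to belong to the line oval $\mathcal{O}$, so the normalization is the statement that every line oval can be translated so that it contains the coordinate axis $\{y=0\}$ as one of its lines (in addition to $\{x=0\}$).
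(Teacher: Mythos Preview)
Your proposal is correct and follows essentially the same approach as the paper: use that $G$ is a permutation (from Theorem~\ref{main}) to choose $c$ with $G(c)=0$, then apply Proposition~\ref{shift-2} so that $f_c=\rho_c f$ has associated polynomial $G_c(z)=G(z+c)$ with $G_c(0)=0$. Your additional remarks---that $\rho_c$ is linear and hence gives an EA-equivalence, and the geometric interpretation that the normalized line oval contains the axis $\{y=0\}$---are correct elaborations but not strictly required, and your step~(1) about first normalizing $f(0,y)=0$ is redundant since the corollary already assumes $f$ is of the form~(\ref{GG}).
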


\begin{proof}
Since $G$ is a permutation, there exists $c\in F$ such that $G(c)=0$.
Then for the bent function $f_c = \rho_c f$ its corresponding o-polynomial is $G_c(z)=G(z+c)$, and
 $G_c(0)=0$.
\end{proof}

\begin{proposition}
\label{aut}
Let $Q=(F, +, \circ)$ be a  prequasifield, $\Sigma(Q)$ be its associated spread, and
$Q^t=(F, +, \star)$ be the transpose prequasifield of $Q$.
Let a bent function $f(x,y)$ be defined by equation $(\ref{GG})$, and
$\mathcal{O} = \{ x=0\}  \cup \{ y= x\star z +G(z) \mid z \in F\}$ be its corresponding line oval.
For $\varphi \in Aut(Q)$ define function $f' = \hat{\varphi} f$ and line oval
${\mathcal O}_{\varphi} = \{ ((\varphi^*)^{-1}(x), (\varphi^*)^{-1}(y))  :  (x,y) \in {\mathcal O} \} $.
Then $f'$ is bent and
$$\widetilde{f'}(x,y)=
1+ \overline{\chi}_{E(\mathcal{O}_{\varphi})}(x,y)=1+\chi_{E(\mathcal{O}_{\varphi})}(y,x).$$
\end{proposition}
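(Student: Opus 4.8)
The plan is to mimic the computation of the Walsh transform carried out in Proposition~\ref{shift-2}, replacing the collineation $\rho_c$ by $\hat{\varphi}$ and carefully tracking how the automorphism interacts with the trace form $B$. First I would write out
$W_{f'}(a,b) = \sum_{x,y\in F} (-1)^{f(\varphi^{-1}(x),\varphi^{-1}(y)) + tr(ax+by)}$
and substitute $x\mapsto \varphi(x)$, $y\mapsto\varphi(y)$ (a bijection of $F\times F$), which turns the exponent into $f(x,y) + tr(a\varphi(x)+b\varphi(y))$. Since $\varphi\in \mathrm{Aut}(Q)$ is in particular $\mathbb{F}_2$-linear, I can move it to the adjoint: $tr(a\varphi(x)) = B(a,\varphi(x)) = B(\varphi^*(a),x) = tr(\varphi^*(a)x)$, and likewise $tr(b\varphi(y)) = tr(\varphi^*(b)y)$. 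Hence $W_{f'}(a,b) = W_f(\varphi^*(a),\varphi^*(b))$.

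Next I would invoke Theorem~\ref{main}: since $f$ is bent, $W_f(\varphi^*(a),\varphi^*(b)) = q$ precisely when $(\varphi^*(b),\varphi^*(a)) \in E(\mathcal{O})$ and $-q$ otherwise. So $W_{f'}(a,b) = q$ exactly when $(\varphi^*(b),\varphi^*(a))\in E(\mathcal{O})$, i.e. when $(b,a)\in \{((\varphi^*)^{-1}(x),(\varphi^*)^{-1}(y)) : (x,y)\in E(\mathcal{O})\}$. The remaining point is to identify this set with $E(\mathcal{O}_\varphi)$ for the stated line oval $\mathcal{O}_\varphi = \{((\varphi^*)^{-1}(x),(\varphi^*)^{-1}(y)) : (x,y)\in\mathcal{O}\}$. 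Because $(\varphi^*)^{-1}$ acts on $F\times F$ by the linear map $(x,y)\mapsto((\varphi^*)^{-1}(x),(\varphi^*)^{-1}(y))$, and linear maps send lines to lines and preserve incidence, $\mathcal{O}_\varphi$ is again a line oval (so in particular $f'$ is bent) and $E(\mathcal{O}_\varphi) = \{((\varphi^*)^{-1}(x),(\varphi^*)^{-1}(y)) : (x,y)\in E(\mathcal{O})\}$, since a point lies on a line of $\mathcal{O}$ iff its image lies on the corresponding line of $\mathcal{O}_\varphi$. Combining, $W_{f'}(a,b) = q(-1)^{1+\bar\chi_{E(\mathcal{O}_\varphi)}(a,b)}$, which gives $\widetilde{f'}(x,y) = 1+\bar\chi_{E(\mathcal{O}_\varphi)}(x,y) = 1+\chi_{E(\mathcal{O}_\varphi)}(y,x)$.

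The main obstacle — really the only subtle point — is bookkeeping: making sure the adjoint $\varphi^*$ (rather than $\varphi$ or $\varphi^{-1}$) is the map that appears, and that it attaches to the geometry of $\mathcal{A}(Q^t)$ rather than $\mathcal{A}(Q)$. One should check that $\varphi\in\mathrm{Aut}(Q)$ implies $\varphi^*$ is a collineation of $\mathcal{A}(Q^t)$: from $\varphi(x\circ z)=\varphi(x)\circ\varphi(z)$ and the defining relation $B(x\star z,y)=B(x,y\circ z)$ of the transpose one deduces $\varphi^*(x\star z) = \varphi^*(x)\star\varphi^*(z)$ after the appropriate manipulation, so $\varphi^*\in\mathrm{Aut}(Q^t)$ and hence $(\varphi^*)^{-1}$ maps lines of $\mathcal{A}(Q^t)$ to lines; this is what legitimizes calling $\mathcal{O}_\varphi$ a line oval in $\mathcal{A}(Q^t)$. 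I would state this verification briefly and then let the Walsh-transform computation run as above, exactly parallel to Propositions~\ref{shift2} and~\ref{shift-2}.
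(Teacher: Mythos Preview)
Your proof is correct and follows exactly the paper's approach: compute $W_{f'}(a,b)$, substitute $(x,y)\mapsto(\varphi(x),\varphi(y))$, pass to the adjoint to obtain $W_{f'}(a,b)=W_f(\varphi^*(a),\varphi^*(b))$, and then read off the dual via Theorem~\ref{main}. The paper carries out precisely this computation and nothing more.

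One small correction to your side verification: the identity $\varphi^*(x\star z)=\varphi^*(x)\star\varphi^*(z)$ is not what falls out of the defining relation. From $B(x\star z,y)=B(x,y\circ z)$ and $\varphi\in\mathrm{Aut}(Q)$ one gets instead $\varphi^*(x\star z)=\varphi^*(x)\star\varphi^{-1}(z)$, so $\varphi^*$ need not lie in $\mathrm{Aut}(Q^t)$. This weaker identity is still enough to show that $(x,y)\mapsto((\varphi^*)^{-1}(x),(\varphi^*)^{-1}(y))$ sends the line $y=x\star z+a$ to the line $y=x\star\varphi(z)+(\varphi^*)^{-1}(a)$, hence is a collineation of $\mathcal{A}(Q^t)$; that is all you need, and the paper itself does not even pause to check it.
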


\begin{proof}
We have
\begin{eqnarray*}
W_{f'}(a,b)
&  = & \sum_{x, y \in F} (-1)^{f(\hat{\varphi}^{-1}(x,y)) + tr( ax+by)}     \\
&  = & \sum_{x, y \in F} (-1)^{f(\varphi^{-1}(x),\varphi^{-1}(y))) + tr(ax+by)}     \\
&  = & \sum_{x, y \in F} (-1)^{f(x,y) + tr( a \varphi(x)+b\varphi(y))}     \\
&  = & \sum_{x, y \in F} (-1)^{f(x,y) + tr( \varphi^*(a) x+ \varphi^*(b)y)}     \\
&  = & W_f (\varphi^*(a),\varphi^*(b))   \\
& =  & \left\{ \begin{array} {l}
\ \  q, \ {\rm if} \ (\varphi^*(b),\varphi^*(a))\in E(\mathcal{O}), \\
-q, \ {\rm if} \ (\varphi^*(b),\varphi^*(a))\not\in E(\mathcal{O}),
\end{array} \right. \\
& =  & \left\{ \begin{array} {l}
\ \  q, \ {\rm if} \ (b,a)\in E(\mathcal{O}_{\varphi}), \\ -q, \ {\rm if} \ (b,a)\not\in E(\mathcal{O}_{\varphi}).
\end{array} \right. \\
\end{eqnarray*}
Therefore,
$$\widetilde{f'}(x,y)= 1+ \overline{\chi}_{E(\mathcal{O}_{\varphi})}(x,y). $$
\end{proof}

\subsection{Bent functions related to symplectic spreads}
\label{semifields}

In this subsection we consider particular cases of spreads.

\begin{corollary} 
\label{symp}
Let $Q=(F, +, \circ)$ be a presemifield such that its transpose presemifield  $Q^t=(F, +, \star)$
is commutative (so $Q$ is dual to a symplectic presemifield).
Consider the spread  $\Sigma(Q)$. Let $G(z)=z\star z$ and let a Boolean function $f(x,y)$ be defined
by equation $(\ref{GG})$. Then function $f(x,y)$ is bent;
$\mathcal{O} = \{ x=0\}  \cup \{ y=z\star z +x\star z \mid z \in F\}$ is a line oval for
$\mathcal{A}(Q^t)$, and the dual bent function for $f(x,y)$ is
$$\tilde{f}(x,y) = 1 + \bar{\chi}_{E(\mathcal{O})}(x,y) = 1 + \chi_{E(\mathcal{O})}(y,x) =
\left\{ \begin{array} {l}
0, \ {\rm if} \ (y,x)\in E(\mathcal{O}), \\ 1, \ {\rm if} \ (y,x)\not\in E(\mathcal{O}),
\end{array} \right. $$
where $E(\mathcal{O})$ is the set of points which are on the lines of the line oval $\mathcal{O}$.
\end{corollary}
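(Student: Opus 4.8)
The plan is to obtain everything as a direct application of Theorem~\ref{main} with the choice $G(z) = z\star z$: once condition 2) of that theorem has been verified, the bentness of $f$, the line-oval property of $\mathcal{O}$, and the description of $\tilde f$ all follow at once. By hypothesis $Q^t = (F,+,\star)$ is a commutative presemifield, so $\star$ is commutative, distributes over $+$ on both sides, and has no zero divisors (if $a,b\in F^*$ then $a\star b\in F^*$, since $(F^*,\star)$ is a quasigroup). The computational heart of the argument is the characteristic-$2$ identity
$$(z+z')\star(z+z') = z\star z + z\star z' + z'\star z + z'\star z' = z\star z + z'\star z',$$
valid for all $z,z'\in F$ because the two cross terms coincide by commutativity and hence cancel.

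First I would check that $G(z)=z\star z$ is a bijection of $F$. If $G(z)=G(z')$, the displayed identity gives $(z+z')\star(z+z')=0$, so $z+z'=0$ by absence of zero divisors, i.e.\ $z=z'$; an injective self-map of a finite set is a bijection.

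Next I would show that $z\mapsto G(z)+b\star z$ is $2$-to-$1$ for every $b\in F^*$. Right distributivity gives $G(z)+b\star z = z\star z + b\star z=(z+b)\star z$. If $(z+b)\star z=(z'+b)\star z'$, then expanding by distributivity and using commutativity yields $z\star z + z'\star z' = b\star(z+z')$, which by the displayed identity is equivalent to $(z+z'+b)\star(z+z')=0$, hence (no zero divisors) to $z+z'\in\{0,b\}$, i.e.\ $z'\in\{z,\,z+b\}$. Since $b\neq 0$ these two values are distinct, so every nonempty fibre of $z\mapsto G(z)+b\star z$ has exactly two elements, which is what is needed.

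With both halves of condition 2) of Theorem~\ref{main} established, that theorem gives immediately that $f(x,y)$ is bent, that $\mathcal{O}=\{x=0\}\cup\{y=z\star z+x\star z\mid z\in F\}$ is a line oval for $\mathcal{A}(Q^t)$, and that $\tilde f(x,y)=1+\bar{\chi}_{E(\mathcal{O})}(x,y)=1+\chi_{E(\mathcal{O})}(y,x)$. There is no substantial obstacle here; the only step that genuinely uses the hypotheses is the bijectivity of $z\mapsto z\star z$, which rests on commutativity of $\star$ together with characteristic $2$ (and would fail for a general non-commutative presemifield).
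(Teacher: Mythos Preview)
Your proposal is correct and follows essentially the same route as the paper: verify condition 2) of Theorem~\ref{main} for $G(z)=z\star z$ and then invoke that theorem. The only cosmetic difference is that the paper packages your key identity by observing that $H_b(z)=(z+b)\star z$ and $G(z)=z\star z$ are $\mathbb{F}_2$-linear maps (which is exactly what your displayed additivity identity says), then reads off $\ker H_b=\{0,b\}$ and $\ker G=\{0\}$; your direct fibre computation amounts to the same thing.
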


\begin{proof}
It is generally known that the function $G(z)=z\star z$ determines a hyperoval
for commutative semifield planes (see, for example, \cite{Jha}).
We provide a proof here for the sake of completeness.
Denote
$$H_b(z)=G(z) + b\star z = z\star z + b\star z = (z+b)\star z.$$
We note that $H_b(z)$ is a linear map over $\mathbb{F}_2$, since operation
$\star$ is commutative, and $\ker H_b = \{0,b\}$.
Therefore, the equation $H_b(z)=a$ has 0 or 2 solutions in $F$.

It remains to prove that $G$ is a permutation. Suppose that the linear map $G$ is not invertible.
Then there exists $a\in F^*$ such that $G(a)=0$.  Therefore $a\star a=0$, a contradiction.

Now the statement of corollary follows from Theorem \ref{main}.
\end{proof}

\medskip

Next we  consider spreads of symplectic quasifields and state a result similar to Corollary \ref{symp}.

\begin{corollary}
\label{symp2}
Let $Q=(F, +, \circ)$  be a symplectic prequasifield and $\Sigma(Q)$ be its associated spread.
Let $G(z)$ be an o-polynomial for the prequasifield $Q^d$ and
let a Boolean function $f(x,y)$ be defined by equation $(\ref{GG})$.
Then function $f(x,y)$ is bent;
$\mathcal{O} = \{ x=0\}  \cup \{ y=G(z) +x\circ z \mid z \in F\}$ is a line oval for $\mathcal{A}(Q)$,
and the dual bent function for $f(x,y)$ is
$$\tilde{f}(x,y) = 1 + \bar{\chi}_{E(\mathcal{O})}(x,y) = 1 + \chi_{E(\mathcal{O})}(y,x) =
\left\{ \begin{array} {l}
0, \ {\rm if} \ (y,x)\in E(\mathcal{O}), \\ 1, \ {\rm if} \ (y,x)\not\in E(\mathcal{O}),
\end{array} \right. $$
where $E(\mathcal{O})$ is the set of points which are on the lines of the line oval $\mathcal{O}$.
\end{corollary}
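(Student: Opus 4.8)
The plan is to deduce this from Theorem \ref{main}, the key preliminary being that for a symplectic prequasifield $Q=(F,+,\circ)$ the transpose operation $\star$ coincides with $\circ$, i.e.\ $Q^t=Q$. Indeed, equality (\ref{B}) says exactly that every right multiplication $R_z(x)=x\circ z$ is self-adjoint with respect to $B$, so $R_z^*=R_z$; since $Q^t$ is defined by $x\star z=R_z^*(x)$, this gives $x\star z=x\circ z$ for all $x,z\in F$, hence $Q^t=Q$ and $\mathcal{A}(Q^t)=\mathcal{A}(Q)$.

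Granting this, Theorem \ref{main} tells us that $f(x,y)$ from (\ref{GG}) is bent if and only if $G$ is a bijection and $z\mapsto G(z)+b\circ z$ is $2$-to-$1$ for every $b\in F^*$, and in that case $\mathcal{O}=\{x=0\}\cup\{y=G(z)+x\circ z\mid z\in F\}$ is a line oval for $\mathcal{A}(Q)$ with the dual bent function as stated. So the next step is to check that the hypothesis ``$G$ is an o-polynomial for $Q^d$'' is precisely this condition. By the definition of o-polynomial applied to $Q^d=(F,+,*)$ with $x*y=y\circ x$, it means $G$ is a permutation and $z\mapsto G(z)+z*b$ is $2$-to-$1$ for every nonzero $b$; unwinding $z*b=b\circ z$ turns this into exactly the requirement in Theorem \ref{main}. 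Hence $f$ is bent, and the description of $\mathcal{O}$ and of $\tilde{f}$ is just the conclusion of Theorem \ref{main} with $\star$ and $Q^t$ replaced by $\circ$ and $Q$.

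I do not expect a genuine obstacle: the proof is a bookkeeping exercise identifying the three operations $\circ$, $\star$ and $*$, and the only point needing attention is confirming that the ``direction'' of multiplication in the o-polynomial condition for $Q^d$ (namely $z*b=b\circ z$) matches, after the substitution $\star=\circ$, the condition $z\mapsto G(z)+b\star z$ coming from Theorem \ref{main}. Everything else --- the shape of the line oval and the formula for the dual --- is inherited verbatim from that theorem, so no extra computation is required.
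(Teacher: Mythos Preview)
Your proposal is correct and follows exactly the paper's approach: the paper's proof consists of the single sentence ``Immediately follows from Theorem \ref{main} using the fact that for symplectic prequasifield we have $Q=Q^t$.'' Your write-up simply spells out the bookkeeping (that $R_z^*=R_z$ gives $\star=\circ$, and that the o-polynomial condition for $Q^d$ unwinds to condition (2) of Theorem \ref{main}) that the paper leaves implicit.
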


\begin{proof}
Immediately follows from Theorem \ref{main} using the fact that for symplectic
prequasifield we have $Q=Q^t$.
\end{proof}

The following theorem gives a lot of examples of symplectic prequasifields.

\begin{theorem}[\cite{Kantor2004}, Proposition 2.19]
Let $F=F_0 \supset F_1 \supset \cdots \supset F_n$ be a chain of distinct fields such that
$[F : F_n]$ is odd, with trace maps $T_i : F \rightarrow F_i$. Set $\lambda_0 = 1$;
let $\lambda_i \in  F_i^*$ and $\zeta_i \in F$ be arbitrary for $1\le i\le n$; and for $0\le i\le n$
write $c_i = \prod_{j=0}^i \lambda_j$. Define $Q=(F,+,\circ )$ by
\begin{eqnarray*}
x \circ y =  xy^2
& + & \sum^n_{i=1} [c_{i-1}yT_i(c_{i-1}xy) + c_iy T_i(c_i xy)] \\
& + & \sum^n_{i=1} [c_{i-1}yT_i(x\zeta_i) + \zeta_iy T_i(c_{i-1} xy)] .
\end{eqnarray*}
Then Q is a prequasifield coordinatizing a symplectic spread.
\end{theorem}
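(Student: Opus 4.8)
The plan is to establish three things about $Q=(F,+,\circ)$: the routine prequasifield axioms, the self-adjointness of every right multiplication $R_z:x\mapsto x\circ z$ with respect to $B(a,b)=tr(ab)$ (which is exactly condition (\ref{B}), so that once $\Sigma(Q)$ is known to be a spread it is automatically symplectic), and the nonsingularity that makes $\Sigma(Q)$ a genuine spread. The first is immediate: $(F,+)$ is abelian; $x\circ 0=0$ since every summand in the displayed formula carries a factor equal to the right argument; and since the maps $T_i$ and $tr$ are $\mathbb F_2$-linear and $a\mapsto a^2$ is additive, each summand of $x\circ z$ is additive in $x$, so every $R_z$ is $\mathbb F_2$-linear and right distributivity holds.

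For the self-adjointness I would expand $tr\big((x\circ z)\,y\big)$ and compare it with $tr\big((y\circ z)\,x\big)$. The one tool needed is transitivity of trace, $tr=tr_i\circ T_i$ (with $tr_i$ the absolute trace of $F_i$), which yields the adjunction identity $tr\big(a\,T_i(b)\big)=tr_i\big(T_i(a)\,T_i(b)\big)=tr\big(T_i(a)\,b\big)$; in particular each $T_i$, regarded as an $\mathbb F_2$-endomorphism of $F$, is self-adjoint. Applying this repeatedly, the expansion of $tr\big((x\circ z)y\big)$ breaks into the term $tr(xyz^2)$, visibly symmetric under $x\leftrightarrow y$, together with one block for each index $i$ built from the pair $c_{i-1},c_i$ and the correction $\zeta_i$; the point of how these terms are arranged is that, after moving the $T_i$'s across, every block becomes a sum of expressions of the shape $tr_i\big(T_i(\alpha x)\,T_i(\beta y)\big)$ that is invariant under $x\leftrightarrow y$. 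Hence $tr\big((x\circ z)y\big)=tr\big((y\circ z)x\big)$, i.e.\ (\ref{B}) holds. This is a longer but entirely mechanical verification.

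It remains to prove that the subspaces $\{(0,y):y\in F\}$ and $\{(x,x\circ z):x\in F\}$, $z\in F$, form a spread, and here the real work lies. Since each $R_z$ is $\mathbb F_2$-linear these are $m$-dimensional subspaces, each meeting $\{(0,y):y\in F\}$ only in $0$, so a counting argument shows they form a spread exactly when distinct graphs meet only in $0$, i.e.\ exactly when $z\mapsto x\circ z$ is injective for every $x\neq 0$. (Once this is known one gets for free that $R_z$ is bijective for $z\neq0$ and that $z\mapsto x\circ z$ is bijective for $x\neq0$, hence the quasigroup axiom, and (\ref{B}) then says the spread is symplectic.) So the crux is the implication $x\circ z=x\circ z'$, $x\neq 0\;\Rightarrow\;z=z'$, and I expect this to be the main obstacle. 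The plan is to follow Kantor: apply the trace maps $T_1,\dots,T_n$ to the equation $x\circ z=x\circ z'$ to peel the tower $F=F_0\supset F_1\supset\cdots\supset F_n$ off one layer at a time — the $i$-th block of $\circ$ is arranged so that projecting by $T_i$ isolates the contribution of $F_i$ — and invoke the hypothesis that $[F:F_n]$ is odd at the innermost layer, where the equation reduces to an $\mathbb F_2$-linear map attached to $F_n$ that is nonsingular precisely because the degree is odd. Carrying out this descent in full is the content of \cite[Proposition~2.19]{Kantor2004}.
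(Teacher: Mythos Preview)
The paper does not prove this theorem at all: it is quoted verbatim from \cite[Proposition~2.19]{Kantor2004} and used as a black box to supply examples of symplectic prequasifields for Corollary~\ref{symp2}. So there is no ``paper's own proof'' to compare against.

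Your outline is a reasonable sketch of how the Kantor--Williams argument goes, and you are honest that the hard step --- injectivity of $z\mapsto x\circ z$ for $x\neq 0$, established by descending through the tower and using oddness of $[F:F_n]$ at the bottom --- is deferred to \cite{Kantor2004}. The self-adjointness part is correctly identified as a mechanical trace computation using $tr = tr_i\circ T_i$. One small caution: in the symplectic verification you should be slightly careful that the $\zeta_i$-terms are not individually symmetric under $x\leftrightarrow y$ but pair up to become so; your phrasing suggests you see this, but the actual bookkeeping in Kantor--Williams is a bit more delicate than ``every block becomes $tr_i(T_i(\alpha x)T_i(\beta y))$''. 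Since the paper itself is content to cite the result, your sketch is already more than what is required here.
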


Using this Theorem and Corollary \ref{symp2} one can get new examples of bent functions.

Now we consider the case where we know examples of o-polynomials for symplectic prequasifields.
We recall one general example of line ovals for symplectic spreads \cite{Kantor75,Mas2003}
associated with prequasifield $Q=(F,+,\circ)$.
We assume that $F$ is an algebraic extension of a field $K$ of degree $n$. So $F$ can be considered
as $K$-vector space of dimension $n$. Let $T: F \rightarrow K$ be the trace map from $F$ into $K$.
We choose an orthogonal basis of $F$ over $K$ such that usual dot product $x \cdot y$ in $K^n$ is equal to $T(xy)$.
Assume that $K\subseteq K(Q)$ and $(kx)\circ y = k(x\circ y)$ for all $k\in K$ and $x\in F$.
It is clear that $R_z(x)= x\circ z$ is $K$-linear map.

We define the alternating bilinear form $\varphi : F\times F \rightarrow K$ by
$$\varphi((x_1,x_2),(y_1,y_2)) = T(x_1y_2 - x_2y_1).$$
Composition of $\varphi$ with absolute trace map $Tr_{K/\mathbb{F}_2}$ gives alternating bilinear form
into $\mathbb{F}_2$.

Let $Q$ be a prequasifield such that the spread $\Sigma(Q)$ of $K^n\times K^n$ be symplectic with respect
to form $\varphi$. Then subspaces of $\Sigma(Q)$ are given by
$$ \{  (x, x\circ z) \mid x\in K^n \} = \{ (x,xM_z) \mid x \in K^n \},$$
where $M_z$ is the matrix of right multiplication (in $Q$) by $z$. It is clear that $M_z$ is symmetric, since
the spread is symplectic:
$$0=\varphi((x,xM_z),(y,yM_z)) = x\cdot (yM_z) - xM_z \cdot y = x (yM_z)^t - xM_zy^t = x(M_z^t -M_z)y^t.$$

Define the map $d$ from the set of symmetric matrices into $K^n$, which associates to every
symmetric matrix $M$ the vector $d(M)$ whose components are the square roots of diagonal elements
of $M$ in their natural order. Then
$$\mathcal{O} = \{ x=0\}  \cup \{ y=d(M_z) +xM_z \mid z \in K^n\}$$
is a line oval \cite{Mas2003} in $\mathcal{A}(Q)$. We also denote $d(z) = d(M_z)$.

Define quadratic form $\mathbf{q}: F \times F \rightarrow \mathbb{F}_2$ by
$\mathbf{q}((x,y)) = Tr_{K/\mathbb{F}_2}(x\cdot y) = Tr_{K/\mathbb{F}_2}(T(xy))$ and denote by
$$S(\mathbf{q}) = \{ v \in F^2 \mid \mathbf{q}(v) =0 \}$$
the set of singular vectors of  $\mathbf{q}$ (including the zero vector). Then \cite{Kantor75,Mas2003} one has
$$E(\mathcal{O}) = S(\mathbf{q}) .$$

\begin{example}
\label{ex:des}
Consider Desarguesian plane $F \times F$, $F= \mathbb{F}_{2^m}$. The alternating bilinear form is
$\varphi((x_1,x_2),(y_1,y_2)) = tr(x_1y_2 - x_2y_1).$ The symplectic spread consists of
 the subspace $\{(0,y) \mid y\in F\}$ and subspaces $\{(x,xz) \mid x\in F\}$, $z \in F$.
We have $d(z)= \sqrt{z}$.

Then the corresponding line oval is
$$\mathcal{O} = \{ x=0\}  \cup \{ y= \sqrt{z} +xz \mid z \in F\}.$$
Therefore,
$$f(x,y) = tr(G(y/x)x) = tr(\sqrt{y/x} \ x) = tr(xy),$$
$$\mathbf{q}((x,y)) = tr(xy),$$
$$\tilde{f}(x,y)  = 1 + \bar{\chi}_{S(\mathbf{q})}(x,y)  = tr(xy),$$
as one could expect.
\end{example}

\begin{example}
\label{ex:lun}
The L\"{u}neburg symplectic spread is defined in the following way \cite{Luneburg}.
Let $F= \mathbb{F}_{2^m}$ be a finite field, $m=2k+1$.
Let $\sigma$ be the automorphism of $F$ defined by
$a^{\sigma} = a^{2^{k+1}}$. Then $a^{\sigma^2} = a^2$.
Points of the plane are $(x_1,x_2,y_1,y_2) \in F^4$.
Let $x = (x_1,x_2)$, $y = (y_1,y_2)$, $z = (z_1,z_2)$.
Define
$$M_z = \left( \begin{array}{cc}
z_1 & z_1^{\sigma^{-1}} + z_2^{1+\sigma^{-1} } \\
z_1^{\sigma^{-1}} + z_2^{1+\sigma^{-1} } & z_2
\end{array} \right),$$
$$x\circ z = x M_z =
(x_1 z_1 + x_2(z_1^{\sigma^{-1}} + z_2^{1+\sigma^{-1} }),
x_1(z_1^{\sigma^{-1}} + z_2^{1+\sigma^{-1} })+x_2 z_2).$$

Then the symplectic spread is defined by the subspace $\{(0,y) \mid y\in F^2\}$
and subspaces $\{(x,x\circ z) \mid x\in F^2\}$, $z \in F^2$.
One has
$$d(z)= (\sqrt{z_1},\sqrt{z_2}).$$

The corresponding line oval is
$$\mathcal{O} = \{ x=0\}  \cup \{ y= (\sqrt{z_1},\sqrt{z_2}) +xM_{(z_1,z_2)} \mid (z_1,z_2) \in F^2\}.$$
For quadratic form we have
$$\mathbf{q}((x_1,x_2,y_1,y_2)) = tr((x_1,x_2)\cdot (y_1,y_2)) = tr(x_1 y_1 + x_2 y_2).$$
Since $E(\mathcal{O}) = S(\mathbf{q})$, in this case the bent function is EA-equivalent to that from
Example \ref{ex:des}.
\end{example}

Therefore, we have the following interesting observation. Symplectic spreads from
Example \ref{ex:des} and Example \ref{ex:lun}
are not equivalent (they have different automorphism groups \cite{Luneburg}),
but they generate EA-equivalent bent functions.
Hence, bent functions which are linear on subspaces of inequivalent spreads can be EA-equivalent.
(In particular, this happens in the special case of line ovals which are
called completely regular line ovals and studied in \cite{Mas2003}, that was the case in
Examples \ref{ex:des} and \ref{ex:lun}.) In \cite{Ces2015} it was mentioned that such EA-equivalence would
give a surprising link between two inequivalent spreads.

\subsection{Desarguesian spreads}
\label{desarg}

In this subsection we consider the case of Desarguesian spreads.

\begin{proposition}
\label{des}
Let $\Sigma(F)$ be the Desarguesian  spread.
Let a Boolean function $f(x,y)$ be defined by equation $(\ref{GG})$, and
$\mathcal{O} = \{ x=0\}  \cup \{ y= xz +G(z) \mid z \in F\}$ be its corresponding line oval.
For $\psi \in GL(2,q) \langle \sigma \rangle$ define function $f' = \psi f$ and line oval
${\mathcal O}_{\psi} = \psi ({\mathcal O}) $.
Then $f'$ is bent and
$$\widetilde{f'}(x,y)= 1+ \overline{\chi}_{E(\mathcal{O}_{\psi})}(x,y).$$
\end{proposition}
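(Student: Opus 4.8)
The plan is to argue exactly as in Propositions~\ref{shift-2} and \ref{aut}, of which this proposition is the common generalization, by transporting the collineation $\psi$ into the linear character of the Walsh transform. First I would recall that for a Desarguesian spread one has $Q=Q^t=F$ with $z\star z'=zz'$, so that $\mathcal{A}(Q)=\mathcal{A}(Q^t)=AG(2,q)$ and the whole group $GL(2,q)\langle\sigma\rangle$ of semilinear maps acts on this single plane, permuting the $1$-dimensional $F$-subspaces and hence stabilizing the spread $\Sigma(F)$. Consequently $f'=\psi f$ is again linear on the elements of $\Sigma(F)$, and since $\psi$ (being semilinear) is an $\mathbb{F}_2$-affine permutation while $tr(ax+by)$ is $\mathbb{F}_2$-linear, $f'$ is EA-equivalent to $f$ and therefore bent; so only the dual formula requires work.

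For the dual, I would substitute $(x,y)\mapsto\psi(x,y)$ in
$$W_{f'}(a,b)=\sum_{x,y\in F}(-1)^{f(\psi^{-1}(x,y))+tr(ax+by)}=\sum_{x,y\in F}(-1)^{f(x,y)+tr\bigl(a[\psi(x,y)]_1+b[\psi(x,y)]_2\bigr)}.$$
Because $tr$ is invariant under the Frobenius, the map $(x,y)\mapsto tr\bigl(a[\psi(x,y)]_1+b[\psi(x,y)]_2\bigr)$ is again an $\mathbb{F}_2$-linear form $(x,y)\mapsto tr(a'x+b'y)$ with $(a',b')=\psi^{*}(a,b)$, where $\psi^{*}\in GL(2,q)\langle\sigma\rangle$ is the adjoint of $\psi$ with respect to the form $\langle(x,y),(x',y')\rangle=tr(xx'+yy')$ (if $\psi$ has matrix $M$ and Frobenius part $\sigma^{k}$, then $\psi^{*}$ has Frobenius part $\sigma^{-k}$ and matrix obtained from $M^{t}$ by applying $\sigma^{-k}$ to its entries). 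Hence $W_{f'}(a,b)=W_f(\psi^{*}(a,b))$. Applying Theorem~\ref{main} to $f$ — with $Q=Q^t=F$, so that $E(\mathcal{O})$ is the point set of the line oval $\mathcal{O}=\{x=0\}\cup\{y=xz+G(z)\mid z\in F\}$ — gives $W_f(c,d)=q$ precisely when $(d,c)\in E(\mathcal{O})$ and $W_f(c,d)=-q$ otherwise. Writing $\tau(x,y)=(y,x)$, this yields $W_{f'}(a,b)=q$ iff $\tau\psi^{*}\tau(b,a)\in E(\mathcal{O})$, i.e. iff $(b,a)\in(\tau\psi^{*}\tau)^{-1}E(\mathcal{O})$.

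It then remains to identify $(\tau\psi^{*}\tau)^{-1}E(\mathcal{O})$ with $E(\mathcal{O}_\psi)$. Since $\psi$ is a collineation of $AG(2,q)$, the image $\mathcal{O}_\psi=\psi(\mathcal{O})$ is again a line oval and $E(\psi(\mathcal{O}))=\psi(E(\mathcal{O}))$, so the identification amounts to the relation $\psi=(\tau\psi^{*}\tau)^{-1}$ as collineations of $AG(2,q)$ — equivalently, to reading $\psi(\mathcal{O})$ through the contragredient map $\psi\mapsto(\tau\psi^{*}\tau)^{-1}$ of $GL(2,q)\langle\sigma\rangle$ onto itself, which is the identity on the shears $\rho_c$ and the diagonal Frobenius maps $\hat\varphi$, so that Propositions~\ref{shift-2} and \ref{aut} are recovered. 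This bookkeeping — keeping straight the coordinate swap $\tau$ that mediates between the plane $\mathcal{A}(Q)$ carrying $f$ and $\psi$ and the plane $\mathcal{A}(Q^t)$ carrying the line oval, together with the fact that a general element of $GL(2,q)\langle\sigma\rangle$, unlike the translations and field automorphisms of the earlier propositions, is not self-adjoint — is the step I expect to be the main obstacle. Once it is settled, $W_{f'}(a,b)=q\,(-1)^{1+\overline{\chi}_{E(\mathcal{O}_\psi)}(a,b)}$, and reading off the dual gives $\widetilde{f'}(x,y)=1+\overline{\chi}_{E(\mathcal{O}_\psi)}(x,y)$, as claimed. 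Alternatively one can bypass the Walsh computation by invoking Theorem~\ref{main2}: it then suffices to show that the line oval associated with $f'$ under that correspondence is exactly $\mathcal{O}_\psi$, and the dual formula follows immediately from Theorem~\ref{main2}.
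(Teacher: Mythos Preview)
Your approach matches the paper's: substitute in the Walsh transform to obtain $W_{f'}(a,b)=W_f(\psi^{*}(a,b))$ and then apply Theorem~\ref{main} to $f$. The paper resolves the bookkeeping obstacle you flag by first assuming $\det\psi=1$---under which a direct matrix check in characteristic~$2$ gives $(\tau M^{t}\tau)^{-1}=M$, so that the condition $(a\gamma+b\delta,\,a\alpha+b\beta)\in E(\mathcal{O})$ reads $\psi^{-1}(b,a)\in E(\mathcal{O})$---and then treats the scalar maps $(x,y)\mapsto(\lambda x,\lambda y)$ and the Frobenius $\sigma$ separately, the latter via Proposition~\ref{aut}.
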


\begin{proof}
Let $\psi \in GL(2,q)$ and
$$\psi : (x,y) \mapsto (x,y) \left( \begin{array}{cc} \alpha & \beta \\ \gamma & \delta  \end{array} \right) =
(\alpha x + \gamma y, \beta x + \delta y).$$
Assume first that $\det (\psi)=1$. Then we have
\begin{eqnarray*}
W_{f'}(a,b)
&  = & \sum_{x, y \in F} (-1)^{f(\delta x + \gamma y, \beta x + \alpha y) + tr( ax+by)}     \\
&  = & \sum_{x', y' \in F} (-1)^{f(x',y') + tr( a(\alpha x' + \gamma y') +b(\beta x'+\delta y'))}     \\
&  = & \sum_{x', y' \in F} (-1)^{f(x',y') + tr( (a \alpha  + b \beta )x' + (a\gamma + b\delta )y')}     \\
&  = & W_f (a \alpha  + b \beta , a\gamma + b\delta )   \\
& =  & \left\{ \begin{array} {l}
\ \  q, \ {\rm if} \ (a\gamma + b\delta , a \alpha  + b \beta)\in E(\mathcal{O}), \\
-q, \ {\rm if} \ (a\gamma + b\delta , a \alpha  + b \beta) \not\in E(\mathcal{O}),
\end{array} \right. \\
& =  & \left\{ \begin{array} {l}
\ \  q, \ {\rm if} \ (\psi^{-1}(b,a)) \in E(\mathcal{O}), \\
-q, \ {\rm if} \ (\psi^{-1}(b,a)) \not\in E(\mathcal{O}),
\end{array} \right. \\
& =  & \left\{ \begin{array} {l}
\ \  q, \ {\rm if} \ (b,a)\in E(\mathcal{O}_{\psi}), \\ -q, \ {\rm if} \ (b,a)\not\in E(\mathcal{O}_{\psi}).
\end{array} \right. \\
\end{eqnarray*}
Therefore,
$$\widetilde{f'}(x,y)= 1+ \overline{\chi}_{E(\mathcal{O}_{\psi})}(x,y). $$

For the map $\psi : (x,y) \mapsto (\lambda x, \lambda y)$ we can work similarly. Finally, if $\psi = \sigma$,
we note that $(\sigma^*)^{-1} = \sigma$ and we apply Proposition \ref{aut}.
\end{proof}


Now we consider question whether equivalent ovals (hyperovals) produce equivalent bent functions.
Extend affine plane $AG(2,q)$ to projective plane $PG(2,q)$.
We recall that automorphism (collineation) group of $PG(2,q)$ is
$P\Gamma L(3,q) = PGL(3,q)\langle\sigma\rangle$ and automorphism group of $AG(2,q)$ is
$A\Gamma L(2,q) = AGL(2,q)\langle\sigma\rangle$, where $\langle\sigma\rangle$ is the Galois group of $F$,
and $AGL(2,q) = F^2\cdot GL(2,q)$ is the affine group.
Consider an oval ${\mathcal O}$ in $PG(2,q)$.
By adding the nucleus $N$ to   ${\mathcal O}$ we get hyperoval ${\mathcal O}'$.
Conversely, taking a hyperoval ${\mathcal O}'$ and removing any point $N\in {\mathcal O}'$ we get an oval
${\mathcal O}'\setminus N$. Hence one hyperoval produces $q+2$ ovals. If ${\mathcal O}'$ is a hyperoval
and $N_1\in {\mathcal O}'$, $N_2\in {\mathcal O}'$, then we construct two ovals
${\mathcal O}_1={\mathcal O}'\setminus N_1$ and   ${\mathcal O}_2={\mathcal O}'\setminus N_2$.
Ovals ${\mathcal O}_1$ and ${\mathcal O}_2$ are (projectively) equivalent in $PG(2,q)$ if the stabilizer
of ${\mathcal O}'$ in $P\Gamma L(3,q)$ maps $N_1$ to $N_2$.  Therefore, the number of projectively
inequivalent ovals obtained from hyperoval ${\mathcal O}'$  is equal to the number of orbits of ${\mathcal O}'$
under the action of stabilizer of ${\mathcal O}'$. By duality, the same statement is true for dual ovals.

By Theorem \ref{main2} bent functions linear on the elements of spreads are in one-to-one correspondence
with line ovals with nucleus in the line at infinity.
Such line ovals are equivalent in $AG(2,q)$ under $A\Gamma L(2,q)$ if and only if
they are (projectively) equivalent in $PG(2,q)$ under $P\Gamma L(3,q)$,
since stabilizer in $P\Gamma L(3,q)$ of the line at infinity is equal to $A\Gamma L(2,q)$.
Line ovals are equivalent in $AG(2,q)$ under $A\Gamma L(2,q)$ if and only if  corresponding
bent functions are EA-equivalent. Hence bent functions are EA-equivalent if and only if
corresponding line ovals are projectively equivalent.
Therefore, the number of EA-inequivalent bent functions obtained from fixed hyperoval ${\mathcal O}'$
is equal to the number of orbits of ${\mathcal O}'$ under the action of automorphism group of ${\mathcal O}'$.

We note that similar results concerning EA-equivalence of Niho bent functions were announced without proof 
in  \cite{Penttila}, cited in \cite{CarletMes2016}.

\section{Conclusion}
\label{conclusion}

We considered  bent  functions which are linear on the elements of spreads.
We showed that duals of such bent functions can be characterized by line ovals.
We studied  these constructions for Desargusian spreads and in two special cases related to symplectic spreads.
In particular, we give a geometric characterization of Niho bent functions and their duals,
give explicit formulas for dual bent functions and present direct connections with ovals and line ovals.
Finally,  we showed that bent functions which are linear
on elements of inequivalent spreads can be EA-equivalent.

\bigskip

{\bf Acknowledgments}

\medskip

The author would like to thank Claude Carlet and Sihem Mesnager for
valuable discussions on the content of this paper.


\end{document}